\newtheorem{definition}{Definition}[section]
\newtheorem{rem}[definition]{Remark}
\newtheorem{prop}[definition]{Proposition}
\newtheorem{lem}[definition]{Lemma}
\newtheorem{coro}[definition]{Corollary}
\newtheorem{teo}[definition]{Theorem}
\newtheorem{ex}[definition]{Example}
\newcommand{\C}{\;\mbox{{\sf I}}\!\!\!C}
\newcommand{\R}{I\!\!R}
\newcommand{\K}{I\!\!K}
\newfont{\bbb}{msbm10 scaled\magstephalf}     
\def\C{\mbox{\bbb C}}
\def\K{\mbox{\bbb K}}
\def\cod{\operatorname{cod}}
\def\R{\mbox{\bbb R}}
\title{\bf On the simplicity of multigerms}
\author{R. Oset Sinha,  M. A. S. Ruas and R. Wik Atique}
\date{}
\begin{document}

\maketitle
\begin{abstract}
We prove several results regarding the simplicity of germs and
multigerms obtained via the operations of augmentation, simultaneous
augmentation and concatenation and generalised concatenation. We
also give some results in the case where one of the branches is a
non stable primitive germ. Using our results we obtain a list which
includes all simple multigerms from $\mathbb C^3$ to $\mathbb C^3$.
\end{abstract}

\renewcommand{\thefootnote}{\fnsymbol{footnote}}
\footnote[0]{2010 Mathematics Subject Classification: 58K40
(primary), 32S05, 32S70 (secondary).} \footnote[0]{Keywords: stable
maps, simple germs, augmentations, concatenations, multigerms.}
\footnote[0]{The first author is partially supported by FAPESP grant
no. 2013/02381-1 and DGCYT and FEDER grant no. MTM2012-33073. The
second and third authors are partially supported by FAPESP grant no.
2008/54222-6. The second author is partially supported by CNPq grant
no. 303774/2008-8.}

\section{Introduction}

In the last few years the study of classifications of singularities
of map-germs $f:(\K^n,S)\rightarrow(\K^p,0)$ where $\K=\C$ or $\R$
has given a step forward (specially when $|S|=r>1$) by substituting
the classical classification methods by operations in order to
obtain multigerms from germs in lower dimensions and fewer branches.
In \cite{robertamond}, Cooper, Mond and Wik-Atique use the operation
of augmentation and define monic and binary concatenations in order
to obtain all $\mathcal A_e$-codimension 1 corank 1 multigerms with
$n\geq p-1$ and $(n,p)$ in Mather's nice dimensions. In
\cite{nosso}, the authors define further operations such as a
simultaneous augmentation and concatenation and the generalised
concatenation (which includes both the monic and binary
concatenations as particular cases) to obtain all $\mathcal
A_e$-codimension 2 corank 1 multigerms with the same dimension
restrictions. However very little is known about the simplicity of
the multigerms obtained via these operations.

A multigerm $f=\{f_1,\ldots,f_r\}:(\K^n,S)\rightarrow(\K^p,0)$ with
$S=\{x_1,\ldots,x_r\}$ is simple if there exists a finite number of
$\mathcal A$-classes (classes under the action of germs of
diffeomorphisms in the source and target) such that for every
unfolding
$F:(\K^n\times\K^d,S\times\{0\})\rightarrow(\K^p\times\K^d,0)$ with
$F(x,\lambda)=(f_{\lambda}(x),\lambda)$ and $f_0=f$ there exists a
sufficiently small neighbourhood $U$ of $S\times\{0\}$ such that for
every $(y_1,\lambda),\ldots,(y_r,\lambda)\in U$ where
$F(y_1,\lambda)=\ldots=F(y_r,\lambda)$ the multigerm
$f_{\lambda}:(\K^n,\{y_1,\ldots,y_r\})\rightarrow(\K^p,f_{\lambda}(y_i))$
lies in one of those finite classes. In \cite{robertamond}, Cooper,
Mond and Wik-Atique proved that all $\mathcal A_e$-codimension 1
multigerms in Mather's nice dimensions are simple. Hobbs and Kirk in
\cite{hobbskirk} and the third author in \cite{roberta} obtain a
list of all simple multigerms from $\R^2$ to $\R^3$. Kolgushkin and
Sadykov in \cite{kolgushkin} and Zhitomirskii in \cite{zhitomirskii}
deal with simple multigerms of curves. In \cite{nishisimple},
Nishimura gives an upper bound on the multiplicity of a simple
multigerm. These are probably the only references related to the
simplicity of multigerms.  (For the case of $\mathcal
A$-classification of simple monogerms, many papers can be cited such
as
\cite{duplessis},\cite{brucegaffney},\cite{mond},\cite{Goryunov},\cite{rieger},\cite{riegerruas},\cite{marartari},\cite{arnold},\cite{houstonkirk}...)

In this paper we assess the problem of knowing when a multigerm
obtained by one of the operations mentioned above is simple. We also
study the case when the multigerm contains a non-stable branch.
Section 2 introduces the notation and the basic definitions. Section
3 deals with augmentations of monogerms. We prove that if the
augmenting function $g$ is not simple then the resulting
augmentation is not simple. Section 4 deals with how simplicity is
affected when you add an extra branch to a simple germ. The first
subsection deals with simultaneous augmentation and concatenation.
We prove that, with certain hypotheses, a simultaneous augmentation
and concatenation is simple if and only if the augmentation comes
from an $\mathcal A_e$-codimension 1 germ. In the second subsection
we study the simplicity of generalised concatenations. The main
result is that a non-monic generalised concatenation of stable germs
$F$ and $g$ where $F$ has zero dimensional analytic stratum is non
simple (Corollary \ref{maingenconc}). The third subsection deals
with germs where one of the branches is non stable. We classify here
all the simple multigerms $h=\{f,g\}$ where $f$ is a non stable germ
and $g$ is a prism on a Morse function or an immersion.

We give clues to which may be the remaining simple multigerms which
are not classified in this paper, namely multigerms $h=\{f,g\}$ with
$f$ and $g$ stable and the dimensions of their analytic strata
between 1 and $p-2$, and the case where $f$ is non stable and $g$ is
a stable singularity more degenerate than a prism on a Morse
function or an immersion. We prove some partial results and show
examples of these cases.

In the last section we use our results to obtain a list which
includes all simple multigerms from $\mathbb C^3$ to $\mathbb C^3$.

\section{Notation}

Let $\mathcal O_{n}^p$ be the vector space of monogerms with $n$
variables and $p$ components. When $p=1$, $\mathcal O_{n}^1=\mathcal
O_n$ is the local ring of germs of functions in $n$-variables and
$\mathcal M_n$ its maximal ideal. The set $\mathcal O_{n}^p$ is a
free $\mathcal O_n$-module of rank $p$. A multigerm is a germ of an
analytic (complex case) or smooth (real case) map
$f=\{f_1,\ldots,f_r\}:(\K^n,S)\rightarrow (\K^p,0)$ where
$S=\{x_1,\ldots,x_r\}\subset \K^n$, $f_i:(\K^n,x_i)\rightarrow
(\K^p,0)$ and $\K=\C$ or $\R$. Let $\mathcal M_n\mathcal O_{n,S}^p$
be the vector space of such map germs. Let $\theta_{\mathbb K^n,S}$
and $\theta_{\mathbb K^p,0}$ be the $\mathcal O_n$-module of germs
at $S$ of vector fields on $\K^n$ and $\mathcal O_p$-module of germs
at 0 of vector fields on $\K^p$ respectively. We denote them by
$\theta_n$ and $\theta_p$. Let $\theta(f)$ be the $\mathcal
O_n$-module of germs $\xi:(\K^n,S)\rightarrow T\K^p$ such that
$\pi_p\circ \xi=f$ where $\pi_p:T\K^p\rightarrow \K^p$ denotes the
tangent bundle over $\K^p$.

Define $tf:\theta_n\rightarrow \theta(f)$ by $tf(\chi)=df\circ\chi$
and $wf:\theta_p\rightarrow \theta(f)$ by $wf(\eta)=\eta\circ f$.
The $\mathcal A_e$-tangent space of a germ $f$ is defined as
$T\mathcal A_e f=tf(\theta_n)+wf(\theta_p)$ and its $\mathcal
A_e$-codimension, denoted by $\mathcal A_e$-cod($f$), is the
$\K$-vector space dimension of $$NA_e(f)=\frac{\theta(f)}{T\mathcal
A_e f}.$$ When we have the $\mathcal A$-tangent space $T\mathcal A
f=tf(\mathcal M_n\cdot\theta_n)+wf(\mathcal M_p\cdot\theta_p)$ in
the denominator of the previous quotient and $\mathcal M_n\theta(f)$
in the numerator, its dimension is called the $\mathcal
A$-codimension. We refer to Wall's survey article \cite{wall} for
general background on the theory of singularities.

\begin{definition}

i) A vector field germ $\eta\in \theta_p$ is called \textit{liftable
over $f$} if there exists $\xi\in\theta_n$ such that
$df\circ\xi=\eta\circ f$ ($tf(\xi)=wf(\eta)$). The set of vector
field germs liftable over $f$ is denoted by $Lift(f)$ and is an
$\mathcal{O}_p$-module.

ii) Let $\widetilde{\tau}(f)=ev_0$($Lift(f)$) be the evaluation at
the origin of elements of $Lift(f)$.
\end{definition}

In general $Lift(f)\subseteq Derlog(V)$ when $V$ is the discriminant
of an analytic $f$ and $Derlog(V)$ represents the $\mathcal
O_p$-module of vector fields tangent to $V$. We have an equality
when $\K=\C$ and $f$ is complex analytic.

The set $\widetilde{\tau}(f)$ is the tangent space to the well
defined manifold in the target containing 0 along which the map $f$
is trivial (i.e. the analytic stratum). Following Mather, $f$ is
stable if and only if all its branches are stable and their analytic
strata have regular intersection (\cite{mather}).

Given $f=\{f_1,\ldots,f_r\}:(\K^n,S)\rightarrow (\K^p,0)$, let
$m_0(f)=\dim_{\mathbb K}\frac{\mathcal O_{n,S}}{f^*(\mathcal M_p)}$
denote the multiplicity of the germ $f$. Note that $$\dim_{\mathbb
K}\frac{\mathcal O_{n,S}}{f^*(\mathcal M_p)}=\sum_{i=1}^r
\dim_{\mathbb K}\frac{\mathcal O_{n,x_i}}{f_i^*(\mathcal M_p)}.$$

From here on we consider only corank 1 germs. We say that
$f=\{f_1,\ldots, f_r\}$ is of type $A_{k_1,\ldots,k_r}$ if $f_i \in
A_{k_i}, \, i=1, \ldots, r.$ For these singularities,
$m_0(f)=k_1+\ldots+k_r+r.$

\section{Simplicity of Augmentations}

\begin{definition}
Let $h:(\mathbb{K}^n,S)\rightarrow (\mathbb K^p,0)$ be a map-germ
with a 1-parameter unfolding $H:(\mathbb K^n\times\mathbb
K,S\times\{0\})\rightarrow (\mathbb K^p\times\mathbb K,0)$ which is
stable as a map-germ, where $H(x,\lambda)=(h_{\lambda}(x),\lambda)$,
such that $h_0=h$. Let $g:(\mathbb K^q,0)\rightarrow (\mathbb K,0)$
be a function-germ. Then, the \textit{augmentation of h by H and g}
is the map $A_{H,g}(h)$ given by $(x,z)\mapsto (h_{g(z)}(x),z)$. A
germ that is not an augmentation is called primitive.
\end{definition}

A natural question arises: given simple germs $h$ and $g$, is
$A_{H,g}(h)$ simple? This is not true in general as can be seen in
the following

\begin{ex}
Consider the simple germ $h(x_1,x_2)=(x_1^3+x_2^4x_1,x_2)$ and
$H(x_1,x_2,\lambda)=(x_1^3+x_2^4x_1+\lambda x_1,x_2,\lambda)$. If we
augment $h$ by the simple function $g(z)=z^4$, we obtain the
non-simple germ $A_{H,g}(h)(x_1,x_2,z)=(x_1^3+(x_2^4+z^4)x_1,x_2,z)$
(\cite{marartari}).
\end{ex}

For monogerms we show that the simplicity of the augmenting function
$g$ is a necessary condition for the simplicity of the augmentation.
In fact, we prove that if two augmentations
$f_1(x,z)=(h_{g_1(z)}(x),z)$ and $f_2(x,z)=(h_{g_2(z)}(x),z)$ are
$\mathcal A$-equivalent, then $g_1$ and $g_2$ are $\mathcal
K$-equivalent. The contact group ${\mathcal K}$ is the set of germs
of diffeomorphisms of $\mathbb{K}^n\times \mathbb{K}^p,0$ which can
be written in the form $H(x,y)=(h(x),H_1(x,y)),$ with $h\in$
Diff$(\mathbb{K}^n,0)$ and $H_1(x,0)=0$ for $x$ near $0$. Two
map-germs $g_1$ and $g_2$ are $\mathcal K$-equivalent if there
exists $H\in \mathcal K$ such that $H(x,g_1(x))=(h(x),g_2(h(x)))$.
We need a previous lemma.

\begin{lem}\label{lemma1}
Let $G_i(z,\epsilon)=g_i(z)+\psi(g_i(z),\epsilon)$ for $i=1,2$ such
that $\psi(0,\epsilon)=\phi(\epsilon)$ is homogeneous of degree $d$.
If $G_1\sim_{\mathcal{K}}G_2$, then $g_1\sim_{\mathcal{K}}g_2$.
\end{lem}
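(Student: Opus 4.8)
The plan is to first replace each $G_i$ by a $\mathcal K$-equivalent germ of a much simpler shape, and then to reduce the statement to a descent property along the slice $\{\epsilon=0\}$. Write $\Phi(t,\epsilon)=t+\psi(t,\epsilon)$, so that $G_i(z,\epsilon)=\Phi(g_i(z),\epsilon)$ for both $i$ with the \emph{same} $\Phi$. Since $\phi(\epsilon)=\psi(0,\epsilon)$ is homogeneous of degree $d$ it vanishes at $0$, hence $\Phi(0,0)=0$, and $\partial\Phi/\partial t(0,0)=1$ (the remaining $\psi$-terms contributing only to higher order in $t$), so $\Phi$ has a simple zero in $t$ at the origin. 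By the Malgrange--Weierstrass preparation theorem applied in the variable $t$ we may factor $\Phi(t,\epsilon)=(t-\alpha(\epsilon))\,w(t,\epsilon)$ with $w$ a unit and $\alpha(0)=0$. Substituting $t=g_i(z)$ and using that $w(g_i(z),\epsilon)$ is a unit gives $(G_i)=(g_i(z)-\alpha(\epsilon))$ as ideals, i.e.
\[
G_i\ \sim_{\mathcal K}\ \widetilde G_i:=g_i(z)-\alpha(\epsilon).
\]
Moreover $\alpha$ solves $\Phi(\alpha(\epsilon),\epsilon)=0$, so its lowest order part is $-\phi(\epsilon)$ up to a unit; in particular the degree-$d$ homogeneity of $\phi$ forces $\alpha\in\mathcal M_\epsilon^{2}$ (here one uses $d\ge 2$). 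Note that $\alpha$ does not depend on $i$.

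This reduces the lemma to the statement that, for fixed $\alpha\in\mathcal M_\epsilon^2$,
\[
g_1(z)-\alpha(\epsilon)\ \sim_{\mathcal K}\ g_2(z)-\alpha(\epsilon)\quad\Longrightarrow\quad g_1\ \sim_{\mathcal K}\ g_2 .
\]
The guiding computation is that quotienting out the $\epsilon$-variables recovers $g_i$: since $\alpha(0)=0$, we have $\mathcal O_{z,\epsilon}/(\widetilde G_i,\epsilon)\cong\mathcal O_z/(g_i)$, so by Mather's theorem (contact classes of function germs are classified by the local algebra $\mathcal O/(\cdot)$) it suffices to show that a $\mathcal K$-equivalence between $\widetilde G_1$ and $\widetilde G_2$ can be taken to respect the ideal generated by the $\epsilon$-coordinates. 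Concretely, writing the equivalence as $g_2-\alpha=U\cdot\big((g_1-\alpha)\circ H\big)$ with $H=(\bar z,\bar\epsilon)$ a diffeomorphism and $U$ a unit, I would like to restrict to $\{\epsilon=0\}$: if $H$ preserves this slice, i.e. $\bar\epsilon(z,0)=0$, then $h(z):=\bar z(z,0)$ is a diffeomorphism of $(\K^q,0)$ and $u(z):=U(z,0)$ is a unit with $g_2(z)=u(z)\,g_1(h(z))$, which is exactly $g_1\sim_{\mathcal K}g_2$.

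The main obstacle is therefore to show that the equivalence $H$ may be chosen to satisfy $\bar\epsilon(z,0)=0$, equivalently that the subspace $\{\epsilon=0\}$ (equivalently the ideal $(\epsilon)$ inside $\mathcal O_{z,\epsilon}/(\widetilde G_i)$) is canonical. This is precisely where the homogeneity hypothesis is essential: because $\alpha\in\mathcal M_\epsilon^2$ has no linear part, the $\epsilon$-directions are degenerate directions of $\widetilde G_i$ and cannot be absorbed into a smooth factor; if instead $\alpha$ had a linear term then $\widetilde G_i$ would be a submersion for every $g_i$ and the conclusion would fail, so the degree-$d$ homogeneity cannot be dropped. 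I expect to establish the canonicity either by comparing the lowest $\mathcal M_\epsilon$-adic homogeneous parts of the two germs on the associated graded and correcting $H$ order by order (a standard homotopy argument, the homogeneity of $\alpha$ guaranteeing that the correction terms lie in the right filtration level), or by characterising $(\epsilon)$ intrinsically through the singular structure of $\widetilde G_i$ and invoking Mather's theorem directly. Once the slice is preserved, the remaining verification that $h$ is a diffeomorphism and $u$ a unit is immediate.
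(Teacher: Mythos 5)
Your first step is sound and is, in substance, the same reduction the paper performs: writing $\Phi(t,\epsilon)=t+\psi(t,\epsilon)$ and preparing it as $(t-\alpha(\epsilon))\,w(t,\epsilon)$ gives $G_i\sim_{\mathcal K}g_i(z)-\alpha(\epsilon)$ with one and the same $\alpha=-\phi\cdot(\mathrm{unit})$; this is a Weierstrass--Malgrange version of the paper's explicit rescaling $\epsilon_i=(1+\widetilde\psi(w,\epsilon))^{1/d}\epsilon_i'$, which lands on $g_i(z)+\phi(\epsilon')$. (One caveat you share with the paper: writing $\psi(t,\epsilon)=\phi(\epsilon)+t\widetilde\psi(t,\epsilon)$, one has $\partial\Phi/\partial t(0,0)=1+\widetilde\psi(0,0)$, which is not automatically $1$; your simple zero, like the paper's root extraction, requires $1+\widetilde\psi(0,0)\neq 0$.)

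The genuine gap is in the second half. After the reduction, the implication ``$g_1-\alpha\sim_{\mathcal K}g_2-\alpha\Rightarrow g_1\sim_{\mathcal K}g_2$'' \emph{is} the lemma, and your route to it --- arrange the contact equivalence $H$ so that $\bar\epsilon(z,0)=0$ and restrict to the slice --- is never actually carried out; both strategies you sketch hit the same obstruction. The ideal $(\epsilon)$ is not intrinsic: take $g_i(z)=z^d$ in one variable and $\phi(\epsilon)=\epsilon^d$; then $\widetilde G_i\sim_{\mathcal K}z^d+\epsilon^d$, which admits the contact automorphism swapping $z$ and $\epsilon$, and this automorphism does not preserve $(\epsilon)$. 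So $(\epsilon)$ cannot be ``characterised intrinsically'', and an order-by-order correction of $H$ has nothing to grip on when degenerate $z$-directions of $g_i$ occur to the same order $d$ as the $\epsilon$-directions --- which is precisely the situation the lemma must handle. What is missing is a proof that \emph{some} equivalence preserves the slice, and that is the entire content of the statement, so as written the plan is circular. The paper sidesteps any normalisation of the diffeomorphism: from $g_1+\phi\sim_{\mathcal K}g_2+\phi$ it passes to the Tjurina algebras $T_i$ (a $\mathcal K$-invariant), uses the homogeneity of $\phi$ via Euler's relation to get $T_i=\mathcal O_{z,\epsilon}/\bigl(\langle g_i,\partial g_i/\partial z\rangle+\langle\partial\phi/\partial\epsilon\rangle\bigr)$, recovers the Tjurina algebra of $g_i$ as $T_i/\mathcal M_\epsilon T_i$, and concludes $g_1\sim_{\mathcal K}g_2$ from the fact that the Tjurina algebra determines the contact class (Mather--Yau). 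Replacing your slice-preservation step by an invariant-theoretic argument of this kind, or supplying a genuine proof of slice preservation, is what your proposal still needs.
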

\begin{proof}
For any $G(z,\epsilon)=g(z)+\psi(g(z),\epsilon)$ satisfying the
hypotheses we claim that
$G(z,\epsilon)\sim_{\mathcal{K}}g(z)+\phi(\epsilon)$. In fact, let
$g(z)=w$, then
$G(z,\epsilon)=w+\phi(\epsilon)+w\widetilde{\psi}(w,\epsilon)=w(1+\widetilde{\psi}(w,\epsilon))+\phi(\epsilon)$.
If $\epsilon=(\epsilon_1,\ldots,\epsilon_m)$ and let
$\epsilon_i=(1+\widetilde{\psi}(w,\epsilon))^{\frac{1}{d}}\epsilon_i'$
we obtain that $G\sim_{\mathcal{K}}w+\phi(\epsilon')$.

Since $G_1\sim_{\mathcal{K}}G_2$, we have
$g_1(z)+\phi(\epsilon)\sim_{\mathcal{K}}g_2(z)+\phi(\epsilon)$ and
so their Tjurina algebras, $T_i$, are isomorphic. Since $\phi$ is
homogeneous $T_i=\frac{\mathcal O_{z,\epsilon}}{\langle
\frac{\partial g_i}{\partial z},g_i \rangle+\langle
\frac{\partial\phi}{\partial\epsilon} \rangle}$. We have that
$$\frac{\mathcal O_{z}}{\langle \frac{\partial g_1}{\partial z},g_1
\rangle}\cong\frac{T_1}{\mathcal
M_{\epsilon}T_1}\cong\frac{T_2}{\mathcal
M_{\epsilon}T_2}\cong\frac{\mathcal O_{z}}{\langle \frac{\partial
g_2}{\partial z},g_2 \rangle}$$ and the result follows.
\end{proof}

We remark here that by \cite{riegerruas}, any simple germ with $n>p$
comes from a simple germ with $n=p$ by just adding quadratic terms
in the remaining variables, so for the case $n\geq p$ it is enough
to study the equidimensional case.

\begin{prop}\label{kequiv}
Let $h:(\mathbb{K}^n,0)\rightarrow (\mathbb K^p,0)$ with $n\geq p-1$
be a non stable primitive monogerm which admits a 1-parameter stable
unfolding $H$. Let $g_1$ and $g_2$ be augmenting functions and $f_1$
and $f_2$ the corresponding augmentations. Then
$$f_1\sim_{\mathcal{A}}f_2\Rightarrow g_1\sim_{\mathcal{K}}g_2.$$
\end{prop}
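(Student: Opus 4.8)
The plan is to reduce the statement about $\mathcal A$-equivalence of the augmentations $f_i(x,z)=(h_{g_i(z)}(x),z)$ to a statement about $\mathcal K$-equivalence of functions to which Lemma \ref{lemma1} applies. The key structural fact I want to exploit is that the augmentation $f_i$ is built from the stable unfolding $H$ by substituting the parameter $\lambda$ with the function $g_i(z)$. Because $H$ is stable, the geometry of $f_i$ — in particular its discriminant and the way it fails to be stable — is governed entirely by how $g_i$ pulls back the special parameter value $\lambda=0$ (the locus where $h_\lambda$ is non-stable). So an $\mathcal A$-equivalence between $f_1$ and $f_2$ should force a compatibility between $g_1$ and $g_2$ at the level of contact with this distinguished locus.

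Concretely, I would proceed as follows. First I would set up coordinates so that $H(x,\lambda)=(h_\lambda(x),\lambda)$ is stable and the non-stable value of the parameter is $\lambda=0$; the augmentation is then the pullback of $H$ along the map $(x,z)\mapsto(x,g_i(z))$. Second, I would use the fact that an $\mathcal A$-equivalence $\Phi, \Psi$ with $\Psi\circ f_1=f_2\circ\Phi$ must carry the analytic stratum / discriminant structure of $f_1$ to that of $f_2$; since $H$ is a fixed stable germ, the source- and target-diffeomorphisms restrict to give an identification of the augmenting data. The goal is to show that the induced relation on the last coordinate is precisely that the functions $G_i(z,\epsilon)=g_i(z)+\psi(g_i(z),\epsilon)$ arising from the unfolding direction are $\mathcal K$-equivalent, with the extra term $\psi$ of the homogeneous form demanded by Lemma \ref{lemma1}. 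Third, I would invoke Lemma \ref{lemma1} to strip off that extra term and conclude $g_1\sim_{\mathcal K}g_2$.

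**The hard part will be** the second step: extracting from an arbitrary pair of diffeomorphisms realizing $f_1\sim_{\mathcal A}f_2$ the precise algebraic relation $G_1\sim_{\mathcal K}G_2$ of the shape required by the lemma. The diffeomorphisms $\Phi$ in the source and $\Psi$ in the target need not respect the product structure $\mathbb K^n\times\mathbb K$ that defines the augmentation, so one cannot simply read off a relation between $g_1$ and $g_2$ coordinatewise. The essential idea is that stability of $H$ rigidifies the situation: because $H$ is stable, any equivalence of its pullbacks must, after composing with automorphisms of the stable model $H$ itself, reduce to a reparametrization of the unfolding parameter — and it is exactly this reparametrization, expressed as a contact change on the $z$-variable together with the homogeneous correction $\psi$, that Lemma \ref{lemma1} is designed to absorb. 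I would make this precise by tracking how $\Psi$ acts on the $\mathcal O_p$-module of liftable vector fields $Lift(H)$, whose evaluation $\widetilde\tau(H)$ is the tangent space to the stratum along which $H$ is trivial; the component of $\Psi$ transverse to this stratum is what encodes $g_i$.

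**Finally,** I would verify the hypotheses of Lemma \ref{lemma1} are met, namely that the correction term satisfies $\psi(0,\epsilon)=\phi(\epsilon)$ homogeneous of degree $d$ — this homogeneity should come from the weighted-homogeneous normal form of the stable unfolding $H$, so that the parameter $\lambda$ enters $h_\lambda$ with a well-defined weight $d$. Granting the reduction, the chain of Tjurina-algebra isomorphisms in the proof of Lemma \ref{lemma1} then yields $\mathcal O_z/\langle \partial g_1/\partial z, g_1\rangle\cong\mathcal O_z/\langle \partial g_2/\partial z, g_2\rangle$, and hence $g_1\sim_{\mathcal K}g_2$, completing the proof. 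I expect the equidimensional reduction noted after Lemma \ref{lemma1} (via \cite{riegerruas}) to let me assume $n=p$ throughout, simplifying the bookkeeping on the extra quadratic variables.
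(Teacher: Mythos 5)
Your overall architecture (convert $f_1\sim_{\mathcal A}f_2$ into a $\mathcal K$-equivalence of associated germs $G_i$ and then apply Lemma \ref{lemma1}) matches the paper's, but the step you yourself flag as ``the hard part'' is a genuine gap, and the mechanism you propose to fill it does not work as stated. The claim that, because $H$ is stable, any equivalence of its pullbacks ``must, after composing with automorphisms of the stable model $H$ itself, reduce to a reparametrization of the unfolding parameter'' is precisely what would need proof; nothing you say rules out equivalences $(\Phi,\Psi)$ that scramble the product structure. Worse, the rigidifying object you propose to track is empty in this situation: since $h$ is primitive, $\widetilde\tau(H)=\{0\}$ (this is Houston's result \cite{houston}, used at the start of the paper's proof), so ``the stratum along which $H$ is trivial'' is a point, every direction of $\Psi$ is transverse to it, and no component of $\Psi$ ``encodes $g_i$''; note also that $\Psi$ lives on the target of $f_i$, not of $H$, so it does not act on $Lift(H)$ at all. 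Making your rigidity claim precise is essentially Damon's theory of $\mathcal K_V$-equivalence for germs induced from a stable germ, a substantial theorem with its own hypotheses that you neither cite nor prove; and even granting it, you would still owe a construction of the variables $\epsilon$ and the correction term $\psi$ putting $G_i$ in the exact shape $g_i(z)+\psi(g_i(z),\epsilon)$, $\psi(0,\epsilon)$ homogeneous, that Lemma \ref{lemma1} needs. The paper's route is different and concrete: $\widetilde\tau(H)=\{0\}$ plus Mather's bound $m_0\leq p+1$ for stable germs forces $m_0(h)=n+2$ (resp. $l+2$ when $p=n+1$), which pins $h$, hence the augmentations, to explicit normal forms (\cite[Lemma 4.10]{riegerruas2}, \cite[Proposition 4.5]{rrwa}); the passage from $\mathcal A$-equivalence to $\mathcal K$-equivalence is then a quotable invariance result for corank 1 germs, namely that $\mathcal A$-equivalence implies $\mathcal K$-equivalence of the maps defining the multiple-point sets (\cite[Lemma 4.7]{riegerruas2}, \cite[Section 3.2]{rrwa}). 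In particular $\epsilon$ and $\psi$ are multiple-point (divided-difference) data from \cite{rrwa}, not a consequence of a weighted-homogeneous normal form of $H$ as you suggest.

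A second concrete error is your plan to ``assume $n=p$ throughout.'' The remark after Lemma \ref{lemma1} (via \cite{riegerruas}) only reduces $n>p$ to $n=p$; the hypothesis here is $n\geq p-1$, and the case $p=n+1$ admits no such reduction. The paper treats it separately (there $n=2l+1$ is odd and $m_0(h)=l+2$), and it is exactly this case where Lemma \ref{lemma1} is invoked; in the equidimensional case the lemma is not needed at all, since $G_i(x,z)=(x_1,\ldots,x_{n-1}^k+g_i(z),x_{n-1})$ is $\mathcal K$-equivalent to $(x_1,\ldots,x_{n-1},g_i(z))$ and the conclusion $g_1\sim_{\mathcal K}g_2$ follows directly. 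So your outline would, even if completed, miss the only case in which the lemma you organize the proof around actually enters.
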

\begin{proof}
First suppose that $p=n$. If $h$ is primitive, by \cite{houston},
$\widetilde\tau(H)=\{0\}$ and so $m_0(h)=m_0(H)\geq n+2$. Since $h$
admits a 1-parameter stable unfolding $m_0(h)\leq n+2$ (by
\cite{mather} stable germs have multiplicity $\leq p+1$). Therefore
$m_0(h)=n+2$. From \cite[Lemma 4.10]{riegerruas2} we know that such
a germ is $\mathcal A$-equivalent to
$(x,y^{n+2}+x_1y+\ldots+x_{n-1}y^{n-1})$ if it is $\mathcal
A_e$-codimension 1 or to
$(x,y^{n+2}+x_1y+\ldots+x_{n-1}^ky^{n-1}+x_{n-1}y^n)$ if it is
$\mathcal A_e$-codimension $k$ with $k\geq 2$.

In the first case, a 1-parameter stable unfolding is
$(x,\lambda,y^{n+2}+x_1y+\ldots+x_{n-1}y^{n-1}+\lambda y^n)$.
Consider two $\mathcal A$-equivalent augmentations
$f_i(x,z,y)=(x,z,y^{n+2}+x_1y+\ldots+x_{n-1}y^{n-1}+g_i(z)y^n)$
$i=1,2$. By \cite[Lemma 4.7]{riegerruas2} we have that
$$G_1(x,z)=(x_1,\ldots,x_{n-1},g_1(z))\sim_{\mathcal K}
G_2(x,z)=(x_1,\ldots,x_{n-1},g_2(z))$$ and so $g_1$ and $g_2$ are
$\mathcal K$-equivalent.

In the second case, a 1-parameter stable unfolding is
$(x,\lambda,y^{n+2}+x_1y+\ldots+x_{n-1}^ky^{n-1}+x_{n-1}y^n+\lambda
y^{n-1})$. Considering two $\mathcal A$-equivalent augmentations
$(x,z,y^{n+2}+x_1y+\ldots+x_{n-1}^ky^{n-1}+x_{n-1}y^n+g_i(z)y^{n-1})$
for $i=1,2$, in the same way as above we have that
$$G_1(x,z)=(x_1,\ldots,x_{n-1}^k+g_1(z),x_{n-1})\sim_{\mathcal K}
G_2(x,z)=(x_1,\ldots,x_{n-1}^k+g_2(z),x_{n-1}).$$ Since $G_i(x,z)$
is $\mathcal K$-equivalent to $(x_1,\ldots,x_{n-1},g_i(z))$ we have
the desired result.

Now suppose that $p=n+1$. As in  the equidimensional case,
$\widetilde\tau(H)=\{0\}$. Therefore $n$ is odd, say $n=2l+1$, and
$m_0(h)=m_0(H)=l+2$. From \cite[Proposition 4.5]{rrwa}, if $l\geq
2$, $h$ is equivalent to either:
$$(x_1,\ldots,x_{2l},y^{l+2}+x_1y+\ldots+x_ly^l,x_{l+1}y+\ldots+x_{2l}y^l+\tilde{h}(x,y))$$
or
$$(x_1,\ldots,x_{2l},y^{l+2}+x_1y+\ldots+x_ly^l,x_{l+1}y+\ldots+x_{2l-1}y^{l-1}+x_{2l}y^{l+1}+y^{l+2}+\tilde{h}(x,y))$$
where in both cases $\tilde{h}\in
\mathcal{M}_n^{l+3}\mathcal{O}_{n}^{n+1}$. Then $f_i$, $i=1,2$, can
be either
$$(x_1,\ldots,x_{2l},z,y^{l+2}+\sum_{j=1}^lx_jy^j,x_{l+1}y+\ldots+x_{2l}y^l+g_i(z)y^{l+1}+\tilde{h}(x,y))$$
or
$$(x_1,\ldots,x_{2l},z,y^{l+2}+\sum_{j=1}^lx_jy^j,x_{l+1}y+\ldots+x_{2l-1}y^{l-1}+g_i(z)y^l+x_{2l}y^{l+1}+y^{l+2}+\tilde{h}(x,y)).$$
Given a corank 1 germ $f_i$ we associate a germ $G_i$ whose
component functions define the set of $l+2$-points appearing in a
stable perturbation of $f_i$. If $f_1$ is ${\mathcal{A}}$-equivalent
to $f_2$ then $G_1$ is $\mathcal K$-equivalent to $G_2$. Following
\cite[Section 3.2]{rrwa}, $G_i:(\mathbb{K}^{3l+2+q},0)\rightarrow
(\mathbb K^{2l+2},0)$ with source coordinates
$(x,z,y,\epsilon_2,\ldots,\epsilon_{l+2})$, and we can show that
$G_i$ is $\mathcal K$-equivalent to
$(x,y,g_i(z)+\psi(g_i(z),\epsilon_2,\ldots,\epsilon_{l+2}))$ in both
cases, where $\psi(0,\epsilon)$ is homogeneous. The result can now
be obtained applying Lemma \ref{lemma1}.

\end{proof}

\begin{ex}
\begin{enumerate}
\item[i)]
The augmentation $f(x,z_1,z_2)=(x^3+(z_1^4+z_2^4)x,z_1,z_2)$ of
$h(x)=x^3$ is not simple since the augmenting function
$g(z_1,z_2)=z_1^4+z_2^4$ is not simple.

\item[iii)]
The converse of the proposition is not true. If we take the
primitive germ $(z^2,z^5)$ and augment it by the simple function
$g(x,y)=x^2+y^4$, we obtain the non-simple germ
$(x,y,z^2,z^5+(x^2+y^4)z)$ (see \cite{houstonkirk}).
\end{enumerate}
\end{ex}

\begin{rem}
We think that Proposition \ref{kequiv} also holds for multigerms.
However, we have only been able to extend the arguments in the proof
for particular examples such as a multigerm consisting only of fold
singularities.
\end{rem}

\section{Simplicity of multigerms}

The classification techniques for multigerms developed recently
consist on combining monogerms to obtain multigerms. In this sense
we are interested in knowing what combinations of simple germs yield
simple multigerms. Subsections 4.1 and 4.3 deal with the simplest
combination of germs, which consists of adding a prism on a Morse
function (when $n\geq p$) or an immersion (when $p=n+1$) to a simple
germ. In 4.1 we study the simultaneous augmentation and
concatenation operation and in 4.3 we combine a primitive
codimension 1 germ with a prism on a Morse function or an immersion.
Subsection 4.2 studies combinations of 2 stable germs, in
particular, those arising from generalised concatenations.

In what follows we discuss the codimension of a multigerm where one
of the branches is a prism on a Morse function or an immersion.

We are considering corank 1 multigerms of type $A_{k_1,\ldots,k_r}$,
for which it is known that their corresponding orbits in the
multijet space are defined by submersions in the stable case and by
ICIS in the finitely determined case
(\cite{Goryunov},\cite{mararmond}).

We note that there is a close relation between the $\mathcal
A$-codimension and the $\mathcal A_e$-codimension. This is due to
Wilson's formula (for the monogerm case see \cite{wall}; see
\cite{hobbskirk} too), which asserts that if the
$\mathcal{A}_e$-codimension is different from 0 and $f$ is
$\mathcal{A}$-simple, then
$$\mathcal{A}_e-\cod(f)=\mathcal{A}-\cod(f)+r(p-n)-p,$$ where $r$ is the number of branches.

Let $f=\{f_1,\ldots,f_r\}:(\K^n,S)\longrightarrow (\K^p,y)$ be a
non-stable multigerm with $\mathcal{A}$-codimension $s$. Let's
assume that $f$ is $k$-determined and $\mathcal{A}$-simple. Suppose
there exists a smooth submanifold $X\subset \ _rJ^k(\K^n,\K^p)$ such
that for all $g:\K^n\longrightarrow \K^p$ and for all
$\{z_1,\ldots,z_r\}\subset \K^n$ we have that $_rj^k
g(z_1,\ldots,z_r)\in X$ if and only if the multigerm of $g$ in
$\{z_1,\ldots,z_r\}$ is $\mathcal{A}$-equivalent to $f$. We have:

\begin{lem}
$\cod_{_rJ^k(\K^n,\K^p)} X=s+(r-1)p$.
\end{lem}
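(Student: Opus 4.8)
The plan is to recognise $X$ as the $\mathcal{A}$-orbit of the $k$-jet of $f$ inside the multijet bundle, and to compute its codimension in two stages: a ``base point'' contribution and a ``fibre'' contribution coming from Mather's finite determinacy theory. First I would observe that, since the points of $X$ parametrise \emph{multigerms}, each of them must have all $r$ branches sharing a common target value; hence $X$ is contained in the submanifold
$$W=\Big\{(\sigma_1,\dots,\sigma_r)\in{}_rJ^k(\K^n,\K^p)\ :\ \sigma_i \text{ and }\sigma_j \text{ have the same target base point for all }i,j\Big\}.$$
The defining conditions equate the $r$ target base points in $\K^p$, i.e. $(r-1)$ independent vector equations, so $\cod_{{}_rJ^k(\K^n,\K^p)}W=(r-1)p$. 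It therefore suffices to prove that $\cod_W X=s$.

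Next I would exploit translation homogeneity. Let $\pi\colon W\to(\K^n)^r\times\K^p$ be the projection sending a multijet to the $r$-tuple of its source base points together with the common target base point. Since translations in the source and in the target belong to $\mathcal{A}$ and act transitively on $(\K^n)^r\times\K^p$ (the source points remaining distinct), the orbit $X$ is a union of such translates, so $\pi|_X\colon X\to\pi(X)$ is a locally trivial fibre bundle over the full-dimensional image $\pi(X)$. Its fibre over $(S,y)$ is $X\cap\pi^{-1}(S,y)$, which is exactly the orbit $O_0$ of the $k$-jet of $f$ under the subgroup of $\mathcal{A}$ fixing the base points $S$ and $y$, acting on the fibre $\big(J^k(n,p)_0\big)^r$ of $k$-jets with prescribed source points $S$ and target $y$ (two $\mathcal{A}$-equivalent multigerms with the same base points are equivalent by a base-point-preserving transformation, after composing with translations). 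Comparing the bundle structures of $W$ and of $X$ over the same base, the $(rn+p)$-dimensional base directions cancel and one is left with
$$\cod_W X=\cod_{(J^k(n,p)_0)^r}O_0 .$$

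It then remains to identify the right-hand side with $s$. Here I would invoke Mather's theory: as $f$ is $k$-determined, for $k$ at least the determinacy degree one has $\mathcal{M}_n^{k+1}\theta(f)\subseteq T\mathcal{A}f$, and the tangent space to $O_0$ at the jet of $f$ is the image of $T\mathcal{A}f=tf(\mathcal{M}_n\theta_n)+wf(\mathcal{M}_p\theta_p)$, whence
$$\cod_{(J^k(n,p)_0)^r}O_0=\dim_\K\frac{\mathcal{M}_n\theta(f)}{T\mathcal{A}f}=s .$$
Combining the three displays gives $\cod_{{}_rJ^k(\K^n,\K^p)}X=(r-1)p+s$. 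I expect the main obstacle to be the middle step: one must be scrupulous that the source and target base-point translations are genuinely complementary orbit directions (this is what the bundle/homogeneity argument secures) and that the relevant invariant is the ordinary $\mathcal{A}$-codimension built from $\mathcal{M}_n\theta(f)$ and $T\mathcal{A}f$, rather than the extended $\mathcal{A}_e$-version; getting these bookkeeping points right is precisely what pins the answer to $s+(r-1)p$ and not to the $\mathcal{A}_e$-codimension.
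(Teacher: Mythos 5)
Your proof is correct and is exactly the ``standard multijet and transversality'' argument that the paper itself does not write out but delegates to the first author's thesis \cite{tesis}: split off the $(r-1)p$ conditions forcing the $r$ target points to coincide, fibre both $W$ and $X$ over the source/target base points, and identify the fibre codimension of the orbit with the $\mathcal{A}$-codimension $s$ using $k$-determinacy (which gives $\mathcal{M}_n^{k+1}\theta(f)\subseteq T\mathcal{A}f$, so the jet-level truncation does not change the count). The single point to phrase with care is your homogeneity step: for $r>1$ the diagonal translations of the source are not transitive on tuples of distinct points, so one must use that a multigerm equivalence consists of $r$ independent local diffeomorphisms, allowing each point of $S$ to be translated separately --- which is what your parenthetical remark implicitly relies on.
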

\begin{proof}
This is proved by standard multijet and transversality techniques, for a detailed account see \cite{tesis}.
\end{proof}

If the $\mathcal A$-codimension of $f_j$ is $i_j$, $j=1,\ldots,r$,
this means that each $f_j$ defines a smooth submanifold in the
appropriate jet space of respective codimension $i_j$. These
submanifolds are defined by $i_1, \ldots , i_r$ equations
respectively.

If we consider the submanifold $X\subset \ _rJ^k(\K^n,\K^p)$ defined
by the equations which define the multigerm (i.e. the equations
which define each of the branches, which are independent since they
involve different variables, plus the equations arising from all the
points having the same image in the target space), we have that its
codimension is $i_1+\ldots +i_r+(r-1)p$ (the $(r-1)p$ extra
equations come from $f(x_1)=...=f(x_r)$). From the previous Lemma
the codimension of such a submanifold is $s+(r-1)p$, so we deduce
that the $\mathcal{A}$-codimension of the multigerm is $s=i_1 +
\ldots + i_r$. In the case of some type of contact between the
strata of the discriminant of different branches, other equations
describing these contacts should be added to define the
corresponding submanifold in the multijet space and so, in that case
$s\geq i_1 + \ldots + i_r$.

When one of the branches of the multigerm is non-stable, it is not
easy to characterize the contact between the strata of the
discriminant. We need the following

\begin{definition}
Let $f:(\mathbb K^n,S)\rightarrow (\mathbb K^p,0)$ be a non-stable
germ and $F(x,\lambda)=(f_{\lambda}(x),\lambda)$ a stable unfolding
of $f$, $\lambda\in \mathbb K^m$. Let $g:(\mathbb K^n,0)\rightarrow
(\mathbb K^p,0)$ be a prism on a Morse function or an immersion such
that $\{f,g\}$ is simple. We say that $g$ is the best possible with
respect to $f$ and $F$ if
\begin{enumerate}
\item[a)]
$g$ is transverse to the limit of the tangent spaces of the strata
of the discriminant of $f$ of dimension greater than 0 and
\item[b)]
there exist representatives $F:U\times \Lambda\rightarrow \mathbb
K^{p}\times \mathbb K^m$ and $g:V\rightarrow \mathbb K^p$  of $F$
and $g$ respectively such that for almost all $0\neq\lambda\in
\Lambda$, $\{f_{\lambda},g\}:U\times V\rightarrow \mathbb K^p$ only
has stable singularities.
\end{enumerate}
\end{definition}

Notice that if $\{F,g\times id_{\mathbb K^m}\}$ is stable then
condition $b)$ holds.

\begin{ex}
\begin{enumerate}
\item[i)]
The fold map $g_1(x,y)=(x,y^2)$ is the best possible with respect to
$f(x,y)=(x^3+y^2x,y)$ and $F(x,y,\lambda)=(x^3+y^2x+\lambda
x,y,\lambda)$. However, $g_2(x,y)=(x^2,y)$ is not, since taking the
deformation $f_{\lambda}(x,y)=(x^3+y^2x+\lambda x,y)$, for
$\lambda<0$ there are two cusps of $f_{\lambda}$ lying on the
discriminant of $g_2$, and so $\{f_{\lambda},g_2\}$ has non stable
singularities.
\item[ii)]
Consider $f_{\lambda}(x,y)=(x^3+y^3x+\lambda_1 x+\lambda_2 xy,y)$.
Clearly, $g_1(x,y)=(x^2,y)$ is not the best possible with respect to
$f$ since for any value of $\lambda=(\lambda_1,\lambda_2)$ there are
either 1 or 3 cusps of $f_{\lambda}$ lying on the discriminant of
$g_1$. If we take $g_2(x,y)=(x,y^2)$, there is a cuspidal curve in
the bifurcation plane such that $f_{\lambda}$ has codimension 1
singularities (namely lips and beaks), and so, for those values of
$\lambda$, $\{f_{\lambda},g_2\}$ has non-stable singularities. Even
further, if $\lambda_1=0$, there is a cusp at $(x,y)=(0,0)$ which
lies on the discriminant of $g_2$ and again $\{f_{\lambda},g_2\}$
has non-stable singularities. However, for almost all $\lambda$,
$\{f_{\lambda},g_2\}$ only has stable singularities and so $g_2$ is
the best possible with respect to $f$ and $F$.
\item[iii)]
The fold map $g(x,y)=(x,y^2+x)$ is the best possible with respect to
the primitive germ $f$ and $F$ where
$f_{\lambda}(x,y)=(x^4+yx+\lambda x^2,y)$. Notice that $\{F,g\times
id_{\mathbb K}\}$ is not stable.
\end{enumerate}
\end{ex}

So if we have a simple germ $h=\{f,g\}$ with $f$ non stable, $F$ a
stable unfolding of $f$ and $g$ a prism on a Morse function or an
immersion which is the best possible with respect to $f$ and $F$,
then, by the above Lemma and considerations, $$\mathcal
A-\cod(h)=\mathcal A-\cod(f)+\mathcal A-\cod(g)=\mathcal
A-\cod(f)+n-p+1.$$ The fact that this is true for example $iii)$
above is an exceptional case since, as we will see in Corollary
\ref{prim1}, a multigerm composed of a non-stable primitive germ and
a fold map is almost always non-simple.

\subsection{Augmentations and concatenations}

We define the operation of simultaneous augmentation and monic
concatenation and derive a formula for the $\mathcal
A_e$-codimension of the resulting multigerm:

\begin{teo}\label{augconc}\cite{nosso}
Suppose $f:(\mathbb K^n,S)\rightarrow (\mathbb K^p,0)$ has a
1-parameter stable unfolding
$F(x,\lambda)=(f_{\lambda}(x),\lambda)$.  Let $g:(\mathbb
K^p\times\mathbb K^{n-p+1},0)\rightarrow (\mathbb K^p\times\mathbb
K,0)$ be the fold map $(X,v)\mapsto(X,\Sigma_{j=p+1}^{n+1} v_j^2)$.
Then,

i) the multigerm $\{A_{F,\phi}(f),g\}$, where $\phi:\mathbb
K\rightarrow\mathbb K$, has
$$\mathcal{A}_e-cod(\{A_{F,\phi}(f),g\})\geq\mathcal{A}_e-cod(f)(\tau(\phi)+1),$$
where $\tau$ is the Tjurina number of $\phi$. Equality is reached
when $\phi$ is quasi-homogeneous and $\langle
dZ(i^*(Lift(A_{F,\phi}(f))))\rangle=\langle dZ(i^*(Lift(F)))\rangle$
where $i:\mathbb K^p\rightarrow \mathbb K^{p+1}$ is the canonical
immersion $i(X_1,\ldots,X_{p})=(X_1,\ldots,X_{p},0)$ and $dZ$
represents the last component of the target vector fields.

ii) $\{A_{F,\phi}(f),g\}$ has a 1-parameter stable unfolding.
\end{teo}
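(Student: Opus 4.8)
The plan is to compute the $\mathcal{A}_e$-codimension of the bigerm $h=\{h_1,h_2\}$, with $h_1=A_{F,\phi}(f)$ and $h_2=g$, by splitting it into the codimensions of the two branches plus an interaction term, and then identifying each piece. First I would record the general decomposition for a bigerm sharing the target $(\mathbb K^{p+1},0)$: since the source vector fields on the two branches are independent while the target field is common, the tangent space $T\mathcal A_e h$ sits inside $T\mathcal A_e h_1\oplus T\mathcal A_e h_2$, and a short diagram chase produces the exact sequence
$$0\to \frac{\theta_{p+1}}{Lift(h_1)+Lift(h_2)}\to NA_e(h)\to NA_e(h_1)\oplus NA_e(h_2)\to 0,$$
the connecting module being exactly $\theta_{p+1}/(Lift(h_1)+Lift(h_2))$ because $\eta\in Lift(h_i)$ is equivalent to $wh_i(\eta)\in th_i(\theta_{n+1})$. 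Hence
$$\mathcal A_e\text{-}\cod(h)=\mathcal A_e\text{-}\cod(h_1)+\mathcal A_e\text{-}\cod(h_2)+\dim_{\mathbb K}\frac{\theta_{p+1}}{Lift(h_1)+Lift(h_2)}.$$
As $g$ is a stable fold, $\mathcal A_e\text{-}\cod(h_2)=0$, so only the first and third terms survive.

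For the first term I would invoke the known $\mathcal A_e$-codimension formula for augmentations (\cite{robertamond}), which gives $\mathcal A_e\text{-}\cod(A_{F,\phi}(f))\ge \tau(\phi)\,\mathcal A_e\text{-}\cod(f)$, with equality when $\phi$ is quasi-homogeneous. For the interaction term I would first compute $Lift(g)$: writing the target coordinates as $(X_1,\dots,X_p,W)$, the discriminant of $g$ is the hyperplane $\{W=0\}$, so $Lift(g)=Derlog(\{W=0\})=\langle \partial_{X_1},\dots,\partial_{X_p},W\partial_W\rangle$. Consequently $\theta_{p+1}/Lift(g)\cong \mathcal O_p\,\partial_W$, and the image of $Lift(h_1)$ in this quotient is precisely the ideal generated by the last components of the liftable fields of $h_1$ restricted to $\{W=0\}$, i.e.
$$\dim_{\mathbb K}\frac{\theta_{p+1}}{Lift(h_1)+Lift(g)}=\dim_{\mathbb K}\frac{\mathcal O_p}{\langle dZ(i^*(Lift(A_{F,\phi}(f))))\rangle},$$
with $i$ and $dZ$ as in the statement. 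The two remaining ingredients are then: (a) Damon's identification of the $\mathcal A_e$-codimension with a $K_{V,e}$-codimension for the slice $\lambda=0$ of the discriminant of $F$, which yields $\dim_{\mathbb K}\mathcal O_p/\langle dZ(i^*(Lift(F)))\rangle=\mathcal A_e\text{-}\cod(f)$; and (b) the inclusion $\langle dZ(i^*(Lift(A_{F,\phi}(f))))\rangle\subseteq\langle dZ(i^*(Lift(F)))\rangle$, coming from the description of the liftable fields of an augmentation in terms of those of $F$. Together (a) and (b) give that the interaction term is $\ge \mathcal A_e\text{-}\cod(f)$, with equality exactly under the stated condition $\langle dZ(i^*(Lift(A_{F,\phi}(f))))\rangle=\langle dZ(i^*(Lift(F)))\rangle$. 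Adding the two surviving terms gives $\mathcal A_e\text{-}\cod(h)\ge \mathcal A_e\text{-}\cod(f)(\tau(\phi)+1)$, with equality when $\phi$ is quasi-homogeneous and the $dZ$-condition holds, proving $i)$.

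For $ii)$ I would exhibit an explicit $1$-parameter unfolding and verify stability by Mather's criterion. Keeping $\phi(0)=0$ and using the stable unfolding $F$, set on the first branch $\mathcal H(x,z,t)=(f_{\phi(z)+t}(x),z,t)$ and on the second $\mathcal G(X,v,t)=(X,\sum_{j=p+1}^{n+1}v_j^2,t)$; at $t=0$ this restricts to $\{A_{F,\phi}(f),g\}$. The branch $\mathcal G$ is $g\times\mathrm{id}_{\mathbb K}$ and is stable. The branch $\mathcal H$ is the augmentation of the stable germ $F$ by the submersion $(z,t)\mapsto\phi(z)+t$ (its $t$-derivative is $1$), hence is itself stable, since augmenting a stable germ by a submersion yields a stable germ. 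It then remains to check that the analytic strata of $\mathcal H$ and $\mathcal G$ meet in general position in $(\mathbb K^{p+2},0)$; by Mather's characterisation (a multigerm is stable iff all branches are stable and their analytic strata are transverse) this gives the stability of $\{\mathcal H,\mathcal G\}$.

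The main obstacle is step (b) together with the precise evaluation of the interaction term: one must control $Lift(A_{F,\phi}(f))$ well enough to see that the last components of its liftable fields, restricted to $\{W=0\}$, land in the ideal generated by those of $F$, and to pin down when the two ideals coincide. The transversality verification in $ii)$ is routine once the branches are seen to be stable, but it still needs the monic structure of the concatenation to guarantee general position.
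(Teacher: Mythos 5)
First, a point of order: the present paper does not prove Theorem~\ref{augconc} at all --- it is quoted from \cite{nosso} --- so the only proof to measure you against is the one in that reference. Your scaffolding does coincide with it: the exact sequence
$$0\to \frac{\theta_{p+1}}{Lift(h_1)+Lift(h_2)}\to NA_e(h)\to NA_e(h_1)\oplus NA_e(h_2)\to 0$$
for a bigerm (this is due to Cooper--Mond--Wik Atique \cite{robertamond}), the computation $Lift(g)=\mathcal O_{p+1}\{\partial_{X_1},\dots,\partial_{X_p},Z\partial_Z\}$ reducing the interaction term to $\mathcal O_p/\langle dZ(i^*(Lift(A_{F,\phi}(f))))\rangle$, Houston's inequality for augmentations \cite{houston}, and Damon's identification $\dim_{\K}\mathcal O_p/\langle dZ(i^*(Lift(F)))\rangle=\mathcal A_e\mbox{-}\cod(f)$ are all correct and are exactly the right ingredients. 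Part ii) of your proposal is also sound: your unfolding $\{(f_{\phi(z)+t}(x),z,t),(X,\sum_{j=p+1}^{n+1}v_j^2,t)\}$ is precisely the one this paper invokes inside the proof of Theorem~\ref{simpleaugconc}, and the transversality check is genuinely routine, since $\widetilde\tau$ of the second branch is the hyperplane $\{Z=0\}\times\K$ while $\widetilde\tau$ of the first contains the $Z$-direction (here one uses $\phi'(0)=0$, so the relevant target change has identity differential).

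The genuine gap is your step (b), and it is not a detail: once Houston and Damon are quoted, the inclusion $\langle dZ(i^*(Lift(A_{F,\phi}(f))))\rangle\subseteq\langle dZ(i^*(Lift(F)))\rangle$ \emph{is} the theorem, and the mechanism you gesture at cannot deliver it. The only available ``description of liftable fields of an augmentation in terms of those of $F$'' is via the base change $\gamma(X,Z)=(X,\phi(Z))$, for which $A_{F,\phi}(f)=\gamma^*F$: any $\eta$ with $d\gamma\circ\eta=\zeta\circ\gamma$, $\zeta\in Lift(F)$, is liftable over the augmentation. But such fields contribute nothing to your ideal: comparing last components in $d\gamma\circ\eta=\zeta\circ\gamma$ gives $\phi'(Z)\,dZ(\eta)=\zeta_\Lambda(X,\phi(Z))$, and since $\phi'(0)=0$ this forces $\zeta_\Lambda\in\langle\Lambda\rangle$ and then $i^*(dZ(\eta))=0$. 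Thus the whole ideal is generated by liftable fields that do \emph{not} descend through $\gamma$; for instance, for $f=x^3$, $F=(x^3+\lambda x,\lambda)$, $\phi=z^k$, the ideal $\langle X\rangle=\langle dZ(i^*(Lift(F)))\rangle$ is produced by the field $12kZ^{3k-1}\partial_X-54X\partial_Z$ (tangent to $D(A_{F,\phi}(f))=\{27X^2+4Z^{3k}=0\}$ and liftable, but not $\gamma$-lowerable since $Z^{3k-1}$ is not a function of $Z^k$), and about these fields your argument says nothing. A workable route uses only tangency: $Lift(A_{F,\phi}(f))\subseteq Derlog(\gamma^{-1}(D(F)))$, so writing $G$ for a reduced equation of $D(F)$ and differentiating the relation $\eta(G\circ\gamma)\in\langle G\circ\gamma\rangle$ sufficiently many times in $Z$ at $Z=0$ yields $i^*(dZ(\eta))\cdot i^*(\partial_\Lambda G)\in\langle i^*G,i^*(\partial_{X_1}G),\dots,i^*(\partial_{X_p}G)\rangle$; one must then identify this colon ideal with $\langle dZ(i^*(Lift(F)))\rangle$, using Damon's theorem a second time. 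That this step is delicate, and not a citable generality, is visible in the Remark following the theorem: the authors themselves cannot decide in general whether the two ideals of the equality condition coincide.
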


\begin{rem}
We do not know an example where the condition $\langle
dZ(i^*(Lift(A_{F,\phi}(f))))\rangle=\langle dZ(i^*(Lift(F)))\rangle$
in the previous theorem is not satisfied. However, we do not have a
proof that it is true in general. A similar technical condition
appears in \cite[Theorem 3.8]{robertamond} for the $\mathcal
A_e$-codimension of the binary concatenation and in the definition
of substantial unfolding in \cite{houston}.
\end{rem}

We need the following:

\begin{lem}\label{adja}
Suppose $f=\{f_1,\ldots,f_r\}:(\mathbb K^n,S)\rightarrow (\mathbb
K^p,0)$ has a 1-parameter stable unfolding $F$, then we have the
following adjacency diagram between augmentations of $f$:
$$F\longleftarrow A_{F,z^{2}}(f)\longleftarrow\ldots \longleftarrow A_{F,z^{k-1}}(f)\longleftarrow A_{F,z^{k}}(f) \longleftarrow \ldots$$
\end{lem}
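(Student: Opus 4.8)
The plan is to reduce the whole diagram to the single uniform adjacency $A_{F,z^{k-1}}(f)\longleftarrow A_{F,z^{k}}(f)$, valid for every $k\geq 2$, together with the identification $A_{F,z}(f)(x,z)=(f_{z}(x),z)=F(x,z)$, which says that augmenting by the coordinate function returns the stable unfolding $F$. Concatenating these adjacencies then yields the displayed chain, the first arrow being the case $k=2$. So it is enough to produce, for each $k\geq 2$, a one-parameter deformation of $A_{F,z^{k}}(f)$ whose germ at the distinguished point set $S\times\{0\}$ is $\mathcal{A}$-equivalent to $A_{F,z^{k-1}}(f)$.

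The key preliminary is that augmentation is compatible with right-equivalence ($\mathcal R$) of the augmenting function: if $g_{2}=g_{1}\circ\phi$ for a germ of diffeomorphism $\phi$ of the augmenting space, then substituting $\phi$ in the augmenting coordinate in the source and composing with $\phi$ in the corresponding target coordinate carries $A_{F,g_{2}}(f)$ to $A_{F,g_{1}}(f)$; explicitly $A_{F,g_{2}}(f)=\Theta\circ A_{F,g_{1}}(f)\circ\Phi$ with $\Phi(x,z)=(x,\phi(z))$ and $\Theta(X,Z)=(X,\phi^{-1}(Z))$. Hence $g_{1}\sim_{\mathcal R}g_{2}$ forces $A_{F,g_{1}}(f)\sim_{\mathcal A}A_{F,g_{2}}(f)$. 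Since the augmenting variable $z$ and the change $\phi$ are shared by all branches, this holds verbatim for the multigerm $f=\{f_{1},\ldots,f_{r}\}$, so the several branches cause no extra difficulty.

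Now deform the augmenting function by $g_{t}(z)=z^{k}-t\,z^{k-1}=z^{k-1}(z-t)$; this induces the deformation $A_{F,g_{t}}(f)(x,z)=(f_{g_{t}(z)}(x),z)$ of $A_{F,z^{k}}(f)$, recovered at $t=0$. For a fixed small $t\neq 0$, the germ of $g_{t}$ at $z=0$ is $z^{k-1}v(z)$ with $v(0)=-t\neq 0$, and a one-variable germ $z^{m}v(z)$ with $v(0)\neq 0$ is right-equivalent to $z^{m}$ (over $\mathbb{C}$; over $\mathbb{R}$ one fixes the sign of $t$). Thus $g_{t}\sim_{\mathcal R}z^{k-1}$ as germs at the origin, and by the previous paragraph the germ of $A_{F,g_{t}}(f)$ at $S\times\{0\}$ is $\mathcal{A}$-equivalent to $A_{F,z^{k-1}}(f)$, which establishes the adjacency. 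For $k=2$ the same computation gives $g_{t}(z)=z(z-t)$, a submersion at $0$, so $g_{t}\sim_{\mathcal R}z$ and the nearby germ is $A_{F,z}(f)=F$.

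The only point needing care is the localisation: one checks that taking the germ at $S\times\{0\}$ (and not at the newly created critical value $z=t$, which is a Morse point contributing only a stable, fold-type piece) is exactly the augmentation by the germ of $g_{t}$ at $0$, and that $F$ stays a bona fide stable unfolding throughout because $g_{t}(z)$ remains close to $0$ for $z$ near $0$. The main obstacle is therefore bookkeeping---certifying that the degeneration $z^{k}\rightsquigarrow z^{k-1}$ of the single augmenting function lifts faithfully to an $\mathcal{A}$-level deformation of the whole multigerm---rather than any analytic estimate.
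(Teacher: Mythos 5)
Your proof is correct and follows essentially the same route as the paper's: the paper deforms $A_{F,z^{k}}(f)$ by replacing the augmenting function $z^{k}$ with $z^{k}+uz^{k-1}=z^{k-1}(z+u)$, observes that for $u\neq 0$ this germ is (as a germ at the origin) right-equivalent to $z^{k-1}$, and concludes that the deformed multigerm is $\mathcal{A}$-equivalent to $A_{F,z^{k-1}}(f)$ --- exactly your deformation $g_{t}(z)=z^{k-1}(z-t)$ combined with your $\mathcal{R}$-compatibility observation. The only divergence is organizational: the paper first shows that at most one branch of $f$ can be non-stable, so the unfolding parameter may be assumed to appear in a single branch before deforming, whereas you treat all branches uniformly via the shared coordinate change in the $z$-variable; both dispose of the multigerm issue correctly.
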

\begin{proof}
First suppose that $f$ can be divided into two non-stable germs
$h_1$ and $h_2$. Then $F=\{H_1,H_2\}$ where $H_i$ is a stable
unfolding of $h_i$, $i=1,2$. Since
$\dim\widetilde\tau(h_1)=\dim\widetilde\tau(h_2)=0$, we have
$\dim\widetilde\tau(H_i)\leq 1$ for $i=1,2$. Now,
$\widetilde\tau(H_1)$ and $\widetilde\tau(H_2)$ have to be
transversal because $F$ is stable, which can only happen if $p+1=2$.
However, when $p=1$, there is no such germ. This means that if $f$
has a 1-parameter stable unfolding then there is at most one branch
(say $f_1$) which is not stable and the germ $\{f_2,\ldots,f_r\}$ is
stable.

Therefore, we can assume that the unfolding parameter in $F$ appears only in $F_1$, i.e.
\begin{equation}
F(x,\lambda)=
\begin{cases}
(f_{1_\lambda}(x),\lambda)\\
(f_2(x),\lambda)\\
\ldots\\
(f_r(x),\lambda).
\end{cases}
\end{equation}

Now consider the augmentation
$A_{F,z^{k}}(f)(x,z)=\{(f_{1_{z^k}}(x),z),\ldots,(f_r(x),z)\}$. The
germ $\{(f_{1_{(z^k+uz^{k-1})}}(x),z),\ldots,(f_r(x),z)\}$ is
contained in the versal unfolding of $A_{F,z^{k}}(f)(x,z)$ and is
$\mathcal R$-equivalent to
$\{(f_{1_{z^{k-1}}}(x),z),\ldots,(f_r(x),z)\}=A_{F,z^{k-1}}(f)(x,z)$.
The result follows.
\end{proof}

\begin{teo}\label{simpleaugconc}
Suppose $f:(\mathbb K^n,S)\rightarrow (\mathbb K^p,0)$ has a
1-parameter stable unfolding
$F(x,\lambda)=(f_{\lambda}(x),\lambda)$. Let $g:(\mathbb
K^p\times\mathbb K^{n-p+1},0)\rightarrow (\mathbb K^p\times\mathbb
K,0)$ be the fold map $(X,v)\mapsto(X,\Sigma_{j=p+1}^{n+1} v_j^2)$.
Suppose that $\phi$ is quasi-homogeneous, $A_{F,\phi}(f)$ is simple
and $\langle dZ(i^*(Lift(A_{F,\phi}(f))))\rangle=\langle
dZ(i^*(Lift(F)))\rangle$, then $\mathcal{A}_e-cod(f)=1$ implies that
$\{A_{F,\phi}(f),g\}$ is simple. Furthermore, if $g$ is transverse
to the limits of the tangent spaces of the strata of
$A_{F,\phi}(f)$, then the converse is also true.
\end{teo}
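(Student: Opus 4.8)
The plan is to decide simplicity of $h=\{A_{F,\phi}(f),g\}$ by describing all multigerms occurring in a versal deformation of $h$ and checking whether they form finitely many $\mathcal A$-classes. The two branches contribute only finitely many mono-local classes: $g$ is a fold, hence stable, and $A_{F,\phi}(f)$ is simple by hypothesis. So everything is decided by the \emph{multi-local} data, i.e. by the contact between the discriminant of a perturbation of $A_{F,\phi}(f)$ and the smooth fold hypersurface of $g$. A first useful observation is that when $\mathcal A_e\text{-}\cod(f)=1$ the $1$-parameter stable unfolding $F$ is automatically \emph{versal}: its initial velocity cannot lie in $T\mathcal A_e f$ (otherwise $F\sim f\times\mathrm{id}$ would be non-stable), hence it spans the $1$-dimensional space $NA_e(f)$. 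By contrast, when $\mathcal A_e\text{-}\cod(f)\geq 2$ the unfolding $F$ has too few parameters to be versal, and $f$ admits essential deformations transverse to the $F$-direction. This dichotomy is what separates the two implications.

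For the forward implication I would use versality of $F$. Every deformation of $A_{F,\phi}(f)$ is then obtained by composing $F$ with a deformation of the quasi-homogeneous function $\phi$; since $A_{F,\phi}(f)$ is simple these give finitely many classes on the first branch. It remains to bound the contact configurations with the fold. Because $F$ is versal with a single parameter, the discriminant of $A_{F,\phi}(f)$ is, up to $\mathcal A$-equivalence, a suspension of the standard codimension $1$ discriminant, so the fold hypersurface can sit against it in only finitely many ways; the hypothesis $\langle dZ(i^*(Lift(A_{F,\phi}(f))))\rangle=\langle dZ(i^*(Lift(F)))\rangle$ guarantees that this contact is already detected by $Lift(F)$, ruling out a continuous family of contacts. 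Combining the finitely many first-branch classes, the single fold class and the finitely many contact types gives finitely many classes in the deformation, so $h$ is simple. The adjacency chain of Lemma \ref{adja} can be used to organise this as an induction on $\tau(\phi)$, with base case the stable multigerm $\{F,g\}$.

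For the converse I would assume the transversality (``best possible''-type) hypothesis. By the codimension discussion preceding the definition of \emph{best possible}, this puts the two branches in general position, so that $\mathcal A\text{-}\cod(h)=\mathcal A\text{-}\cod(A_{F,\phi}(f))+\mathcal A\text{-}\cod(g)$ and no contact equations are hidden in the discriminant. Assume $\mathcal A_e\text{-}\cod(f)=c\geq 2$; then, as noted above, $F$ is not versal and the stable perturbation of $A_{F,\phi}(f)$ carries several independent critical strata whose mutual positions are not controlled by a single parameter. I would then slide the fold $g$ relative to these strata: the position of the fold hypersurface with respect to two or more independent strata is an $\mathcal A$-invariant, so this yields a genuine $1$-parameter family of pairwise non-equivalent multigerms, i.e. a modulus. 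Writing the family explicitly in the unfolding coordinates of $F$ and using transversality to see that the configuration cannot collapse, one concludes that $h$ is not simple, which is the contrapositive of the converse.

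The main obstacle is the converse, and within it the proof that the relative-position parameter is an essential modulus rather than one removable by source and target diffeomorphisms. The transversality hypothesis is used precisely here: it keeps the several critical strata in general position so that their configuration survives as an $\mathcal A$-invariant; without it the strata could be forced to coincide and the apparent modulus would collapse. In the forward direction the complementary delicate point is certifying that the single codimension $1$ stratum meets the fold in only finitely many orbits with no hidden one-parameter family, where versality of $F$, quasi-homogeneity of $\phi$ and the $Lift$ identity together reduce the contact analysis to the stable germ $F$.
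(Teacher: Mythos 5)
Both directions of your proposal stall exactly where the real work lies, and in both cases the paper's proof is a codimension count rather than the geometric analysis you sketch. In the forward direction, your key claim that ``the fold hypersurface can sit against the discriminant in only finitely many ways'' is precisely the statement that needs proof, and your justification is not sound: the discriminant of $A_{F,\phi}(f)$ is \emph{not} a suspension of a standard codimension~$1$ discriminant; it is the pullback of the discriminant of $F$ under $(X,z)\mapsto (X,\phi(z))$ (already for $f(x)=x^3$, $\phi(z)=z^k$ one gets the curves $27X^2+4z^{3k}=0$, which change with $k$), so no finiteness of contact positions can be read off from such a description. Also, the hypothesis $\langle dZ(i^*(Lift(A_{F,\phi}(f))))\rangle=\langle dZ(i^*(Lift(F)))\rangle$ is not a ``contact detector'': together with quasi-homogeneity of $\phi$ it enters only through Theorem \ref{augconc}, to guarantee the equality $\mathcal{A}_e\text{-}\cod(\{A_{F,\phi}(f),g\})=\mathcal{A}_e\text{-}\cod(f)(\tau(\phi)+1)$. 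The paper's actual argument is: with $\mathcal{A}_e\text{-}\cod(f)=1$ this number equals $\tau(\phi)+1$, which is also a lower bound for the stratum codimension of the multigerm; since the stratum codimension can never exceed the $\mathcal{A}_e$-codimension, the two coincide, which rules out moduli at $h$ itself; the only remaining danger, that $h$ is an exceptional member of a modal family, is excluded by the adjacency chain of Lemma \ref{adja}, because such a family would have to be $\{A_{F,\phi'}(f),g\}$ with $\tau(\phi')=\tau(\phi)-1$, contradicting simplicity of $A_{F,\phi}(f)$. Your versality observation about $F$ is correct but does not substitute for this counting step.

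Your converse has the same defect in sharper form: you propose to exhibit a modulus by sliding the fold relative to several independent strata, and you yourself identify as the ``main obstacle'' the proof that this relative position is an $\mathcal{A}$-invariant that cannot be removed by diffeomorphisms --- but that is the entire content of the claim, and it is left unproven (it is not even clearly true at the level of generality you state it). The paper never constructs a modulus. Instead, simplicity of $h$ is used to invoke Wilson's formula; the transversality hypothesis makes $g$ the best possible with respect to $A_{F,\phi}(f)$ and its $1$-parameter stable unfolding $\widetilde F(x,z,\lambda)=(f_{\phi(z)+\lambda}(x),z,\lambda)$ (whose existence is part ii) of Theorem \ref{augconc}), giving the additivity $\mathcal{A}\text{-}\cod(h)=\mathcal{A}\text{-}\cod(A_{F,\phi}(f))+n-p+1$; Wilson's formula then converts this into $\mathcal{A}_e\text{-}\cod(h)=\mathcal{A}_e\text{-}\cod(f)\tau(\phi)+1$, and comparing with the product formula $\mathcal{A}_e\text{-}\cod(f)(\tau(\phi)+1)$ forces $\mathcal{A}_e\text{-}\cod(f)=1$. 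So the converse is a purely numerical comparison of two expressions for the same codimension; to repair your version you would either have to carry out the modulus construction in full or switch to this counting argument.
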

\begin{proof}
From Theorem \ref{augconc} we have that
$\mathcal{A}_e-cod(\{A_{F,\phi}(f),g\})=\mathcal{A}_e-cod(f)(\tau(\phi)+1)$.

Suppose first that $\mathcal{A}_e-cod(f)=1$. We know that the
stratum codimension of $\{A_{F,\phi}(f),g\}$ is greater than or
equal to $\mathcal A_e-cod(A_{F,\phi}(f))+1=\mathcal
A_e-cod(f)\tau(\phi)+1=\tau(\phi)+1=\mathcal{A}_e-cod(\{A_{F,\phi}(f),g\})$.
The stratum codimension can never be greater than the $\mathcal
A_e$-codimension, so they must be equal. Having this, the only way
for $\{A_{F,\phi}(f),g\}$ to be non-simple is that it is an
exceptional value of the parameter of a family with modality.
Considering Lemma \ref{adja}, since $A_{F,\phi}(f)$ is simple, the
modal family would be $\{A_{F,\phi'}(f),g\}$ with
$\tau(\phi')=\tau(\phi)-1$ and clearly this is not the case.
Therefore, $\{A_{F,\phi}(f),g\}$ is simple.

Now suppose that $\{A_{F,\phi}(f),g\}$ is simple. Its normal form is
\begin{equation}
\begin{cases}
(f_{\phi(z)}(x),z)\\
(X,\Sigma_{j=p+1}^{n+1} v_j^2)
\end{cases}
\end{equation}
If we take the 1-parameter stable unfolding of the augmentation
$\widetilde F(x,z,\lambda)=(f_{\phi(z)+\lambda}(x),z,\lambda)$, it
turns out by part ii) of Theorem \ref{augconc} that
\begin{equation}
\begin{cases}
(f_{\phi(z)+\lambda}(x),z,\lambda)\\
(X,\Sigma_{j=p+1}^{n+1} v_j^2,\lambda)
\end{cases}
\end{equation}
is a 1-parameter stable unfolding of $\{A_{F,\phi}(f),g\}$.
Therefore, if we consider the deformation
$\{(f_{\phi(z)+\lambda}(x),z),(X,\Sigma_{j=p+1}^{n+1} v_j^2)\}$, it
only has stable singularities. Since $g$ has no contact with the
strata of $A_{F,\phi}(f)$, $g$ is the best possible with respect to
$A_{F,\phi}(f)$ and $\widetilde F$ and so $$\mathcal
A-\cod(\{A_{F,\phi}(f),g\})=\mathcal A-\cod(A_{F,\phi}(f))+\mathcal
A-\cod(g)=\mathcal A-\cod(A_{F,\phi}(f))+n-p+1.$$

Wilson's formula yields
\begin{align}
\mathcal{A}_e-cod(\{A_{F,\phi}(f),g\})&=\mathcal A-\cod(\{A_{F,\phi}(f),g\})+(r+1)(p-n)-p\\
&=\mathcal A-\cod(A_{F,\phi}(f))+n-p+1+(r+1)(p-n)-p\\
&=\mathcal A_e-\cod(A_{F,\phi}(f))+1\\
&=\mathcal A_e-\cod(f)\tau(\phi)+1.
\end{align}
On the other hand, since
$\mathcal{A}_e-cod(\{A_{F,\phi}(f),g\})=\mathcal{A}_e-cod(f)(\tau(\phi)+1)$,
$\mathcal A_e-\cod(f)=1$.
\end{proof}

\begin{ex}\label{exaugconc}
\begin{enumerate}
\item[i)] Let $f(y)=(y^2,y^3)$ and consider the augmentations and concatenations
\begin{equation}
\begin{cases}
(y^2,y^3+x^{k+1}y,x)\\
(y,x,0)
\end{cases}
\end{equation}
These bigerms are called $A_0S_k$ ($k\geq 1$) in \cite{hobbskirk}
and \cite{roberta} and are simple.
\item[ii)] Let $f(y)=(y^2,y^{5})$ and consider the augmentation and concatenation
\begin{equation}
\begin{cases}
(y^2,y^{5}+x^2y,x)\\
(y,x,0)
\end{cases}
\end{equation}
The bigerm $A_0B_2$ is not simple since $\mathcal A_e-\cod(f)=2$ and
the immersion is transverse to the strata of $B_2$. Therefore, the
bigerms $A_0B_k$ are not simple for $k>1$.
\item[iii)] Consider the codimension 1, $n$-germ from $\mathbb K^{n-1}$ to $\mathbb K^{n-1}$
\begin{equation}
\begin{cases}
(x_1^2,x_2,\ldots,x_{n-1})\\
\ldots\\
(x_1,x_2,\ldots,x_{n-1}^2)\\
(x_1^2+x_2+\ldots+x_{n-1},x_2,\ldots,x_{n-1})\\
\end{cases}
\end{equation}
and augment and concatenate to obtain the $n+1$-germ from
$\K^n\rightarrow\K^n$
\begin{equation}
\begin{cases}
(x_1^2,x_2,\ldots,x_{n-1},z)\\
\ldots\\
(x_1,x_2,\ldots,x_{n-1}^2,z)\\
(x_1^2+x_2+\ldots+x_{n-1}+\phi(z),x_2,\ldots,x_{n-1},z)\\
(x_1,x_2,\ldots,x_{n-1},z^2)
\end{cases}
\end{equation}
If $\phi$ is quasihomogeneous, $\phi(z)=z^k$ and we obtain a simple
multigerm of codimension $k$. This means that there are infinitely
many simple multigerms with $n+1$ fold branches. However, as we will
se later, there is no simple multigerm with $n+2$ branches. We
remark here that by \cite[Corollary 3.9]{nosso}, any multigerm with
$n+1$ fold branches is an augmentation and concatenation. These
examples also hold for the case $(n,n+1)$ considering immersions
instead of folds.
\item[iv)]
Consider the codimension 1, $n-1$-germ from $\mathbb K^{n-2}$ to
$\mathbb K^{n-2}$ and augment and concatenate it twice. We obtain
infinitely many non-simple multigerms from $\mathbb K^n$ to $\mathbb
K^n$ with $n+1$ fold branches of codimension
$(\tau(\phi_1)+1)(\tau(\phi_2)+1)$. The last fold is transverse to
the strata of the previous $n$-germ.
\begin{equation}
\begin{cases}
(x_1^2,x_2,\ldots,x_{n-2},y,z)\\
\ldots\\
(x_1,x_2,\ldots,x_{n-2}^2,y,z)\\
(x_1^2+x_2+\ldots+x_{n-2}+\phi_1(y)+\phi_2(z),x_2,\ldots,x_{n-2},y,z)\\
(x_1,x_2,\ldots,x_{n-2},y^2,z)\\
(x_1,x_2,\ldots,x_{n-2},y,z^2)\\
\end{cases}
\end{equation}
\item[v)]
The extra hypothesis for the converse of Theorem \ref{simpleaugconc}
to be true is necessary. If we simultaneously augment and
concatenate the codimension 2 bigerm $\{(x^2,y),(x^2+y^3,y)\}$ we
obtain the codimension 4 simple trigerm (\cite{roberta})
\begin{equation}
\begin{cases}
(x^2,y,z)\\
(x^2+y^3+z^2,y,z)\\
(x,y,z^2)\\
\end{cases}
\end{equation}
Notice that the double point curve for
$\{(x^2,y,z),(x^2+y^3+z^2,y,z)\}$ describes a cusp which is tangent
in the limit to $g$.
\end{enumerate}
\end{ex}

\subsection{Generalised concatenations}

Now we study the simplicity of multigerms admitting a decomposition
$h=\{f,g\}$ where $f$ and $g$ are stable germs. We prove in
Proposition \ref{0n-2} that if $\widetilde\tau(f)=\{0\}$ and
$\dim_{\mathbb K}\widetilde\tau(g)=p-2$, then $h$ is not simple.
From this we deduce in Corollary \ref{maingenconc} that generalised
concatenations where $\widetilde\tau(f)=\{0\}$ are non-simple.
Furthermore, we discuss simplicity of $h$ when $1\leq
\dim\widetilde\tau(f),\dim\widetilde\tau(g)<p-1$, which may or may
not be generalised concatenations.

\begin{definition}\cite{nosso}
Let $f:(\mathbb K^{n-s},S)\rightarrow (\mathbb K^{p-s},0)$, $s<p$,
be a germ of finite $\mathcal{A}_e$-codimension and let $F:(\mathbb
K^{n},S\times \{0\})\rightarrow (\mathbb K^{p},0)$ be a
$s$-parameter stable unfolding of $f$ with
$$F(x_1,\ldots,x_n)=(F_1(x_1,\ldots,x_n),\ldots,F_{p-s}(x_1,\ldots,x_n),x_{n-s+1},\ldots,x_n),$$
where $F_i(x_1,\ldots,x_{n-s},0,\ldots,0)=f_i(x_1,\ldots,x_{n-s})$.
Suppose that $\overline g:(\mathbb K^{n-p+s},T)\rightarrow (\mathbb
K^{s},0)$ is stable. Then the multigerm $h=\{F,g\}$ is a generalised
concatenation of $f$ with $g$, where $g=Id_{\mathbb K^{p-s}}\times
\overline g$.
\end{definition}

Observe that with this definition, $\dim\widetilde\tau(g)\geq
p-s\geq 1$. If $g$ is a monogerm and $\dim\widetilde\tau(g)=p-s$, it
is of the form
$$g(x_1,\ldots,x_n)=(x_1,\ldots,x_{p-s},g_{p-s+1}(x_{p-s+1},\ldots,x_n),\ldots,g_{p}(x_{p-s+1},\ldots,x_n)).$$

When $s=1$ and $g_p(x_p,\ldots,x_n)=\Sigma_{i=p}^{n}x_i^2$ (or
$g_p=0$ when $n=p-1$), $h$ is called a monic concatenation. When $h$
is of the form
\begin{equation}
\begin{cases}
(X,y,u)\mapsto (f_u(y),u,X)\\
(x,Y,u)\mapsto (Y,u,g_{u}(x))
\end{cases}
\end{equation}
where $(f_u(y),u)$ and $(u,g_u(x))$ are 1-parameter stable
unfoldings of a certain $f$ and $g$ respectively, $h$ is called a
binary concatenation.

In \cite{nishisimple}, Nishimura proved the following Theorem:

\begin{teo}\label{nishi}
Let $f=\{f_1,\ldots,f_r\}:(\K^n,S)\rightarrow (\K^p,0)$ with $n\leq
p$ be a multigerm with minimal corank. If $np\neq 1$ and $f$ is
$\mathcal A$-simple, then the following inequality holds
$$m_0(f)\leq \frac{p^2+(n-1)r}{n(p-n)+n-1}.$$
\end{teo}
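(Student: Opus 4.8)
The plan is to convert the statement into a finite-dimensional orbit-counting problem in the multijet space and to extract the bound from the fact that a simple germ has modality zero (meets only finitely many $\mathcal A$-orbits in a neighbourhood). I work over $\K=\C$; the real case is entirely analogous. Since an $\mathcal A$-simple germ is finitely $\mathcal A$-determined, for $k\gg0$ the whole $\mathcal A$-class of $f$ is seen inside ${}_rJ^{k}(\K^{n},\K^{p})$ through the jet ${}_rj^{k}f(x_1,\ldots,x_r)$. Minimal corank means corank $1$, so after a right-left change each branch has the prenormal form $f_i(x,y)=(x_1,\ldots,x_{n-1},\phi_{i,n}(x,y),\ldots,\phi_{i,p}(x,y))$ with all $\phi_{i,j}\in\mathcal M_n^{2}$, and $m_0(f)=\sum_{i=1}^{r}m_0(f_i)$ with $m_0(f_i)=k_i+1\ge2$, so $m_0(f)=k_1+\cdots+k_r+r$.

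First I identify the right stratum. For corank $1$ the local algebra $\mathcal O_{n,x_i}/f_i^{*}(\mathcal M_p)$ is isomorphic to $\mathcal O_1/\mathcal M_1^{\,k_i+1}$, so the contact ($\mathcal K$-) class of $f$ is determined by the discrete data $(k_1,\ldots,k_r)$. Hence the set $\Sigma\subset{}_rJ^{k}$ of jets with these multiplicities is a single $\mathcal K$-orbit, a smooth stratum whose dimension stabilises for large $k$. The $\mathcal A$-group acts inside $\Sigma$, and simplicity forces a neighbourhood of ${}_rj^{k}f$ to meet only finitely many $\mathcal A$-orbits; since $\Sigma$ is one $\mathcal K$-orbit this is possible only if the $\mathcal A$-orbit of the generic germ $f_{\mathrm{gen}}$ of multiplicity $(k_1,\ldots,k_r)$ is open in $\Sigma$. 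Using the Lemma above applied to $f_{\mathrm{gen}}$, its $\mathcal A$-orbit has dimension $\dim{}_rJ^{k}-\mathcal{A}\text{-}\cod(f_{\mathrm{gen}})-(r-1)p$, while the same transversality count applied to the contact stratum gives $\dim\Sigma=\dim{}_rJ^{k}-\mathcal{K}\text{-}\cod(f_{\mathrm{gen}})-(r-1)p$. Therefore the generic $\mathcal A$-orbit is open in $\Sigma$ iff $\delta:=\mathcal{A}\text{-}\cod(f_{\mathrm{gen}})-\mathcal{K}\text{-}\cod(f_{\mathrm{gen}})=0$, and $f$ simple forces $\delta=0$.

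The second, computational, step is to evaluate $\delta$ for the generic corank $1$ multigerm of multiplicity $m=m_0(f)$ and to read off when $\delta=0$ can hold. By definition $\delta=\dim_{\K}\big(f^{*}(\mathcal M_p)\theta(f)+tf(\mathcal M_n\theta_n)\big)/\big(wf(\mathcal M_p\theta_p)+tf(\mathcal M_n\theta_n)\big)$, which measures exactly the freedom lost when the matrices $\mathrm{GL}_p(\mathcal O_n)$ available to $\mathcal K$ are replaced by the pulled-back target coordinate changes available to $\mathcal A$. Computing this for $f_{\mathrm{gen}}$ produces the three contributions of the statement: the term $p^{2}$ is the dimension of the linear part of the left action on the $p$ target coordinates; the factor $n-1$ records the $n-1$ regular source directions present in each branch, giving the $(n-1)r$ term when summed over branches; and the factor $n(p-n)$ accounts for the $p-n$ extra target components beyond the equidimensional case coupled with the $n$ source variables. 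Keeping track of these and of the common target point (the $(r-1)p$ of the Lemma), the condition $\delta\le0$ rearranges into $n(p-n)\,m+(n-1)(m-r)\le p^{2}$, which is precisely the asserted inequality after dividing by $n(p-n)+n-1$. The hypothesis $np\neq1$ removes the case $n=p=1$, where the left action is too small and the $A_k$ germs of every multiplicity are simple.

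The main obstacle is the exact orbit-dimension computation of the previous paragraph: one must show that $\delta$ stabilises in $k$ and equals an affine function of $m$ with exactly the coefficients above. The delicate points are controlling the interaction of $tf(\mathcal M_n\theta_n)$ with the two $wf$-type modules, verifying that for the generic germ the relevant evaluation maps attain maximal rank so that $\delta$ genuinely vanishes at the bound and becomes strictly positive beyond it, and doing the bookkeeping for $r$ branches sharing a single target point. Finally one checks the boundary strata with $(k_1',\ldots,k_r')$ smaller than $(k_1,\ldots,k_r)$ to confirm that the binding constraint is the one coming from $\Sigma$ itself; together with the simplicity criterion of the first step this turns the count directly into the stated bound.
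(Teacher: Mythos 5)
First, a point of order: the paper does not prove this statement at all --- it is Nishimura's theorem, quoted verbatim from \cite{nishisimple} ("In \cite{nishisimple}, Nishimura proved the following Theorem"), so there is no in-paper proof to compare against and your argument has to stand entirely on its own. Its first half is a reasonable and standard reduction: for corank $1$ the contact class is indeed determined by the multiplicities $(k_1,\ldots,k_r)$, and over $\C$ simplicity does force some $\mathcal A$-orbit to be open in the stratum $\Sigma$ of jets of type $A_{k_1,\ldots,k_r}$ (finitely many constructible $\mathcal A$-classes, each contained in $\Sigma$ or disjoint from it, cover a neighbourhood of ${}_rj^kf$ in $\Sigma$, so one has interior). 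Granting the $\mathcal K$-analogue of the paper's codimension lemma, openness is equivalent to $\delta:=\mathcal A\text{-}\cod(f_{\mathrm{gen}})-\mathcal K\text{-}\cod(f_{\mathrm{gen}})=0$. Up to here the skeleton is fine.

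The genuine gap is everything after that, which is precisely where the theorem lives. You never compute $\delta$: the paragraph that is supposed to do so only assigns post hoc meanings to the three terms of the answer ($p^2$ ``is'' the linear left action, $(n-1)r$ ``records'' the regular source directions, $n(p-n)$ ``accounts for'' the extra target components), which is numerology matched to the statement, not a derivation. Worse, the claim on which your conclusion rests --- that $\delta$ ``equals an affine function of $m$ with exactly the coefficients above'', so that $\delta\le 0$ rearranges into $n(p-n)m+(n-1)(m-r)\le p^2$ --- is false as stated: since $wf(\mathcal M_p\theta_p)\subseteq f^{*}(\mathcal M_p)\theta(f)$ one always has $T\mathcal A f\subseteq T\mathcal K f$, hence $\delta\ge 0$ for every germ, while your affine expression is negative for all germs of small multiplicity (e.g.\ stable ones, which do satisfy $\delta=0$). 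What the theorem actually requires is a uniform estimate: for \emph{every} corank-$1$ multigerm of type $A_{k_1,\ldots,k_r}$ with $m_0$ exceeding the bound, one must exhibit $\delta>0$, i.e.\ bound from above, uniformly over the whole contact class, how much of $f^{*}(\mathcal M_p)\theta(f)$ the module $tf(\mathcal M_n\theta_n)+wf(\mathcal M_p\theta_p)$ can cover --- in effect showing that beyond the $\mathcal R$-directions the left action contributes at most $p^2$ effective parameters and that higher-order target changes give nothing new. Your own closing paragraph concedes that the stabilisation of $\delta$, the maximal-rank verifications, and the boundary-strata check are all left open; those items are not loose ends of an otherwise complete argument, they \emph{are} the proof.
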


From this result we obtain

\begin{prop}\label{0n-2}
Let $h=\{f,g\}$ be a multigerm with $f,g$ stable and $n=p\neq 1,2$
or $n=p-1$. Suppose that $\widetilde\tau(f)=\{0\}$ and
$\dim_{\mathbb K}\widetilde\tau(g)=p-2$, then $h$ is not $\mathcal
A$-simple.
\end{prop}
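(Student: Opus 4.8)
The strategy is to apply Nishimura's inequality (Theorem \ref{nishi}) to derive a contradiction with the multiplicity forced by the hypotheses on the analytic strata. The plan is to first compute $m_0(h)=m_0(f)+m_0(g)$ explicitly using the two dimension constraints $\widetilde\tau(f)=\{0\}$ and $\dim_{\mathbb K}\widetilde\tau(g)=p-2$, and then to show that this value is strictly larger than the upper bound $\frac{p^2+(n-1)r}{n(p-n)+n-1}$ appearing in Theorem \ref{nishi}, which would contradict $\mathcal A$-simplicity. Here $r=2$ since $h=\{f,g\}$ has exactly two branches.

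First I would pin down the multiplicities. Since $\widetilde\tau(f)=\{0\}$, the analytic stratum of $f$ is zero-dimensional, which (as recalled after the definition of $\widetilde\tau$, and via \cite{houston}) forces $f$ to have maximal multiplicity for a stable germ: in the equidimensional case $n=p$ this gives $m_0(f)=p+1$, and in the case $n=p-1$ one gets the corresponding maximal value $m_0(f)=\frac{p+1}{2}$ (here $n$ must be odd, $n=2l+1$, so $p=2l+2$ and $m_0(f)=l+2$), exactly as computed in the proof of Proposition \ref{kequiv}. For $g$, the condition $\dim_{\mathbb K}\widetilde\tau(g)=p-2$ means its analytic stratum has codimension $2$ in the target; writing $g$ in the normal form $g=Id_{\mathbb K^{p-2}}\times\overline g$ with $\overline g$ a stable germ of minimal corank on the remaining variables, I would read off $m_0(g)$ from the multiplicity of the two-dimensional-target stable germ $\overline g$. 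The upshot is that $m_0(h)=m_0(f)+m_0(g)$ is a concrete number depending only on $p$ (and the parity constraint in the $n=p-1$ case).

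Once these values are in hand, the remaining work is purely arithmetic: substitute $n=p$, $r=2$ (respectively $n=p-1$, $r=2$) into Nishimura's bound and verify that $m_0(h)$ exceeds it for all admissible $p$. In the equidimensional case the denominator $n(p-n)+n-1$ collapses to $p-1$, so the bound becomes $\frac{p^2+p-1}{p-1}$, which I would compare against $m_0(f)+m_0(g)$; the hypothesis $p\neq 1,2$ is presumably exactly what is needed to make the strict inequality hold (the excluded small cases are where the bound is large enough to accommodate the multiplicity). In the case $n=p-1$ the denominator becomes $n+1=p$, giving a bound of $\frac{p^2+2(n-1)}{p}$ to beat.

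I expect the main obstacle to be the bookkeeping of $m_0(g)$ under the constraint $\dim\widetilde\tau(g)=p-2$: I must be sure I am using the \emph{correct} maximal (or forced) multiplicity of the two-target-dimensional factor $\overline g$ rather than merely a lower bound, since Nishimura's theorem bounds $m_0$ from above and the contradiction only works if my computed $m_0(h)$ is genuinely this large. The subtle point is that $\dim\widetilde\tau(g)=p-2$ fixes the analytic-stratum dimension but a priori leaves the stable type of $\overline g$ with some freedom; I would argue that among stable germs with a fixed codimension-$2$ stratum the relevant multiplicity is determined (a corank-one stable germ into $\mathbb K^2$ with this stratum is a cusp-type singularity, giving $m_0(\overline g)=3$), so that $m_0(g)=(p-2)\cdot 1 + 3 = p+1$ after accounting for the identity factor. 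With that value secured, the arithmetic comparison with the two explicit bounds, under the stated exclusions $n=p\neq 1,2$, finishes the proof.
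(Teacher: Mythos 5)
Your overall strategy (compute $m_0(h)$ from the hypotheses on the analytic strata and play it against Theorem \ref{nishi}) is exactly the paper's strategy, but your execution has three genuine gaps. First, you assume $f$ and $g$ are monogerms and fix $r=2$. The decomposition $h=\{f,g\}$ only splits the branches of $h$ into two groups: in the equidimensional case, a stable $f$ with $\widetilde\tau(f)=\{0\}$ is any multigerm of type $A_{k_1,\ldots,k_s}$ with $\sum_{i=1}^s k_i=n$ (so $m_0(f)=n+s$ and $s$ can be as large as $n$), and $\dim\widetilde\tau(g)=p-2$ allows $g$ to be $A_1^2$ (two transversal folds, $m_0=4$, two branches) as well as a cusp; in the case $n=p-1$ the hypothesis on $g$ in fact \emph{forces} $g$ to be a bigerm of two transversal immersions, and $f$ can be a monogerm, a bigerm, or a $p$-tuple point of immersions. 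Since $r$ enters Nishimura's bound linearly, fixing $r=2$ makes the inequality you are testing the wrong one; the paper runs the argument separately for each of these configurations. Second, your multiplicity for the prism is wrong: for $g=Id_{\mathbb K^{p-2}}\times\overline g$ one has $\mathcal O_{n}/g^*\mathcal M_p\cong \mathcal O_{n-(p-2)}/\overline g^*\mathcal M_2$, so $m_0(g)=m_0(\overline g)=3$ (cusp) or $4$ ($A_1^2$), not $(p-2)+3=p+1$. This is not a bookkeeping slip you can absorb: Nishimura's theorem is an \emph{upper} bound on $m_0$, so the contradiction requires a correct \emph{lower} bound on $m_0(h)$, and an overestimate invalidates the logic even if the final inequality happens to point the same way. (You also have the parity backwards in the $n=p-1$ case: a stable monogerm into $\mathbb K^{n+1}$ with zero-dimensional stratum requires $n$ even, with $m_0=\frac{n+2}{2}$; the odd case is what forces $f$ to be a bigerm or a tuple of immersions.)

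Third, and most importantly, arithmetic alone cannot finish the proof in the case $n=p-1$, where the statement excludes no values of $p$. With the correct multiplicities, the monogerm case ($f$ a cross-cap-type germ plus $g$ two immersions, $r=3$) gives $\frac{p+5}{2}\leq\frac{p^2+3(p-2)}{2p-3}$, which is an \emph{equality} at $p=3$: Nishimura's bound is not violated, and the paper must invoke the classification in \cite{roberta} (a cross-cap together with two immersions in $(\mathbb C^2,\mathbb C^3)$ is not simple) to conclude. Similarly, the $(p+2)$-tuple point at $p=2$ survives the bound and is excluded by the cross-ratio modulus of four lines. Your plan, which expects the hypotheses $n=p\neq 1,2$ to be ``exactly what is needed to make the strict inequality hold,'' has no mechanism for these boundary cases, so as written it would prove the proposition only for the configurations where the inequality is strict, which is strictly less than what is claimed.
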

\begin{proof}
Suppose that $h$ is simple.

1) First take the case $n=p$. From Nishimura's result we have that
$m_0(h)\leq \frac{n^2+(n-1)r}{n-1}.$ Since $f$ is stable and
$\widetilde\tau(f)=\{0\}$, it must be an
$A_{k_1,\ldots,k_s}$-singularity with $\sum_{i=1}^s k_i=n$. On the
other hand $\dim_{\mathbb K}\widetilde\tau(g)=n-2$ implies that $g$
is either an $A_2$-singularity or an $A_1^2$-singularity. We have
that $$m_0(h)= m_0(f)+m_0(g)=\sum_{i=1}^s
(k_i+1)+m_0(g)=n+s+m_0(g),$$ where $m_0(g)=3$ or 4 depending on
wether $g$ is an $A_2$ or an $A_1^2$, respectively.

For the $A_2$ case we have that
$n+s+3\leq\frac{n^2+(n-1)(s+1)}{n-1}$ where $s+1=r$. This implies
that $n-2\leq 0$ and therefore $n=1,2$. For example the bigerms
$\{x^2,x^3\}$ when $n=1$ and $\{(x^3+xy,y),(x,y^3+xy)\}$ when $n=2$
are simple (\cite{nosso}).

In the $A_1^2$ case $n+s+4\leq\frac{n^2+(n-1)(s+2)}{n-1}$ where
$s+2=r$. Again this implies that $n=1,2$. For example the trigerms
$\{x^2,x^2,x^2\}$ when $n=1$ and $\{(x^3+xy,x),(x,y^2),(x,y^2+x)\}$
when $n=2$ are simple (\cite{nosso}).

2) For the case $n=p-1$, Nishimura's result yields $m_0(h)\leq
\frac{p^2+(p-2)s}{2p-3}.$  Here $\dim_{\mathbb
K}\widetilde\tau(g)=p-2$ implies that $g$ is a transversal
intersection of two immersions. We distinguish between the cases
where $n$ is even or odd.

If $n$ is even, $\widetilde\tau(f)=\{0\}$ implies that $f$ is a
monogerm with $m_0(f)=\frac{n+2}{2}$ or it is a $p$-tuple point with
$m_0(f)=p$. If $f$ is a monogerm we have that
$\frac{n+2}{2}+2=\frac{p+5}{2}\leq \frac{p^2+3(p-2)}{2p-3}$, which
implies $p\leq 3$, however when $p=3$ a cross-cap together with two
immersions is not simple (\cite{roberta}) so $h$ is not simple. If
$f$ is a $p$-tuple point we have that $p+2\leq
\frac{p^2+(p-2)(p+2)}{2p-3}$ and so $p\leq 2$, which is a
contradiction.

If $n$ is odd, $\widetilde\tau(f)=\{0\}$ implies that $f$ is either
a bigerm $\{f_1,f_2\}$ with $m_0(f_1)=\frac{n-1+2}{2}$ and
$m_0(f_2)=1$ or it is a $p$-tuple point. The case where $h$ is a
$p+2$-tuple point is the same as in the case that $n$ is even and
yields $p\leq 2$, however, the cross-ratio shows that a quadruple
point when $p=2$ is not simple. When $f$ is a bigerm we get the
inequality $\frac{n+1}{2}+1+2=\frac{p+6}{2}\leq
\frac{p^2+4(p-2)}{2p-3}$ which again implies $p\leq 2$.
\end{proof}

\begin{ex}\label{ejem0n-2}
\begin{enumerate}
\item[i)]
In the equidimensional case, the bigerm $A_2A_n$ from $\K^n$ to
$\K^n$ given by
\begin{equation}
\begin{cases}
(x_1^{n+1}+x_2x_1+\ldots +x_{n-2}x_1^{n-3}+yx_1^{n-2}+zx_1^{n-1},x_2,\ldots,x_{n-2},y,z)\\
(x_1,\ldots,x_{n-2},y,z^3+yz)
\end{cases}
\end{equation}
is not simple when $n>2$. It has $\mathcal A_e$-codimension $n$
(\cite{nosso}) but the stratum codimension is always 2.

\item[ii)] A $p+2$-tuple point for any $(n,p)$ with $n\geq p-1$ is not simple.
\end{enumerate}
\end{ex}

\begin{coro}\label{morse}
Let $h=\{f,g\}$ be a multigerm with $f,g$ stable and
$\widetilde\tau(f)=\{0\}$. If $h$ is simple, then $g$ is a prism on
a Morse function or an immersion.
\end{coro}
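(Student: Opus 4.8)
The plan is to reduce the statement to a bound on the dimension of the analytic stratum of $g$ and then to exclude every degenerate possibility by Nishimura's multiplicity inequality (Theorem \ref{nishi}), exactly in the spirit of Proposition \ref{0n-2}. The first observation is that, among stable corank $1$ germs, the prisms on a Morse function (for $n\geq p$) and the immersions (for $p=n+1$) are precisely those whose analytic stratum attains the maximal dimension $p-1$. Hence it suffices to show that the hypotheses force $\dim_{\mathbb K}\widetilde\tau(g)=p-1$, equivalently to rule out $c\geq 2$, where I write $c=p-\dim_{\mathbb K}\widetilde\tau(g)$. The case $n>p$ will be handed back to the equidimensional one by the Rieger--Ruas reduction recalled after Lemma \ref{lemma1} (adjoining non-degenerate quadratic terms preserves both hypothesis and conclusion), so only $n=p$ and $p=n+1$ need genuine work.

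For $n=p$ I would argue uniformly in $c$. Since $\widetilde\tau(f)=\{0\}$, the germ $f$ has type $A_{k_1,\ldots,k_s}$ with $\sum k_i=p$, whence $m_0(f)=p+s$; likewise a stable $g$ with $\dim_{\mathbb K}\widetilde\tau(g)=p-c$ has type $A_{l_1,\ldots,l_t}$ with $\sum l_j=c$, whence $m_0(g)=c+t$. With $r=s+t$ this gives $m_0(h)=p+c+r$, and Theorem \ref{nishi} reads $p+c+r\leq \frac{p^2+(p-1)r}{p-1}=\frac{p^2}{p-1}+r$. The $r$-terms cancel and leave $c\leq \frac{p}{p-1}<2$ for $p\geq 3$, contradicting $c\geq 2$; this disposes of $n=p\geq 3$ in one stroke, the values $p=1,2$ being the usual exceptions.

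For $p=n+1$ the subcase $c=2$ is Proposition \ref{0n-2} itself, so I am left with $c\geq 3$. Here I would run the same computation using the bound $m_0(h)\leq\frac{p^2+(p-2)r}{2p-3}$, with $\widetilde\tau(f)=\{0\}$ pinning $f$ to one of the primitive types of Proposition \ref{0n-2} (a monogerm with $m_0(f)=\frac{n+2}{2}$, a bigerm, or a $p$-tuple point) and with $\dim_{\mathbb K}\widetilde\tau(g)=p-c$ constraining the branch and multiplicity data of $g$. The point that organises the estimate is a monotonicity: since the coefficient $\frac{p-2}{2p-3}$ of $r$ is strictly smaller than $1$, adjoining any extra branch raises $m_0(h)$ by at least $1$ while raising the right-hand side by less, so it strictly increases the difference between $m_0(h)$ and the right-hand side of the inequality. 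It therefore suffices to violate the inequality on the configurations with the fewest branches — two transverse immersions when $c=2$, a suspended cross-cap monogerm when $c=3$ — each of which forces $p\leq 3$, a contradiction for large $p$.

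The hard part is exactly this last bookkeeping for $p=n+1$. Unlike the equidimensional case the $r$-terms do not cancel, and, more insidiously, a more degenerate $g$ need not have larger multiplicity (a suspended cross-cap has $m_0(g)=2$ but $c=3$), so one cannot simply monotone in $m_0$. The remedy is to carry the branch count alongside the multiplicity and to exploit that extra branches only widen the gap, reducing everything to finitely many minimal configurations together with a hands-on treatment of $p=2,3$ (where, as the cross-cap-plus-two-immersions example already shows, non-simplicity must be checked directly). I note finally that the tempting alternative of degenerating $g$ to the codimension-$2$ case inside an unfolding of $h$ is not available: since $\widetilde\tau(f)=\{0\}$ the analytic stratum of $f$ is an isolated point, so one cannot move the common target value while keeping the $f$-branch primitive, and this is why Nishimura's numerical bound, rather than an adjacency argument, is the right tool.
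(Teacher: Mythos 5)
Your proposal is correct in substance but takes a genuinely different route from the one the paper intends. The paper gives no separate argument for this corollary: it is meant to follow from Proposition \ref{0n-2} by degeneration. Since $g$ is stable, $h_t=\{f,g+t\}$, $t\in\K^p$, is an unfolding of $h$ that leaves the branch $f$ (hence the hypothesis $\widetilde\tau(f)=\{0\}$) untouched; if $g$ is not a prism on a Morse function or an immersion, its discriminant contains points of stratum dimension exactly $p-2$ arbitrarily close to the origin (for $n\geq p$ an $A_2$ point of a branch or a transverse $A_{1,1}$ crossing, for $n=p-1$ a transverse crossing of two immersive sheets), so suitable small translations $t$ produce, arbitrarily close to $h$ inside this unfolding, multigerms $\{f,g'\}$ satisfying the hypotheses of Proposition \ref{0n-2}. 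Those are non-simple, and simplicity is inherited by the multigerms appearing in unfoldings of $h$, so $h$ is non-simple. Your route instead redoes the Nishimura estimate uniformly in $c$. Your equidimensional computation is correct, and in fact cleaner than the paper's own case analysis inside Proposition \ref{0n-2}, since the $r$-terms cancel for every $c$ at once. For $p=n+1$ your plan also works numerically: writing $\cod\widetilde\tau=2m_0-1$ for the stable corank $1$ monogerms in these dimensions, Theorem \ref{nishi} reduces to $c(2p-3)+r\leq 3p$, which already fails for $c\geq 3$, $r\geq 2$, $p\geq 3$; but you leave this bookkeeping as an admitted sketch, and, exactly as the paper does, you must still quote the direct low-dimensional verifications ($p\leq 3$: cross-cap with two immersions, quadruple and quintuple points) where the bound is not violated. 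What your approach buys is a self-contained proof of the $c\geq 3$ cases independent of the adjacency mechanism; what it costs is precisely the case analysis that the degeneration argument makes unnecessary.

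One assertion in your proposal is wrong and should be removed: the closing claim that the degeneration route is ``not available'' because $\widetilde\tau(f)=\{0\}$ pins the common target value. This misreads how the degeneration is performed. One does not move the meeting point along a stratum of $f$ (there is none to move along); one moves $g$. Translating $g$ is a legitimate unfolding of the multigerm, it changes nothing about the branch $f$, and the stability of $g$ guarantees that the translated branch is the same stable map with its discriminant merely repositioned, so that a less degenerate point of the discriminant of $g$ now sits over the distinguished point of $f$. This is exactly why the statement is labelled a corollary of Proposition \ref{0n-2} rather than a theorem requiring new estimates; your numerical work for $c\geq 3$ duplicates what that observation gives for free.
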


\begin{coro}\label{maingenconc}
Let $h=\{f,g\}$ be a non-monic generalised concatenation (i.e. $g$
is not a prism on a Morse function or an immersion) and suppose that
$\widetilde\tau(f)=\{0\}$, then $h$ is non simple.
\end{coro}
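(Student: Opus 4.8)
The plan is to read off Corollary \ref{maingenconc} as the contrapositive of the immediately preceding Corollary \ref{morse}, after checking that a generalised concatenation fits the hypotheses of that corollary. The only real work is bookkeeping: matching the notation of the generalised concatenation to the statement $h=\{f,g\}$ with both branches stable, and recognising that ``non-monic'' is precisely the failure of the conclusion of Corollary \ref{morse}.

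First I would unwind the definition of generalised concatenation. There $h=\{F,g\}$, where $F$ is an $s$-parameter stable unfolding of a germ of finite $\mathcal A_e$-codimension (hence $F$ is stable), and $g=Id_{\mathbb K^{p-s}}\times\overline g$ with $\overline g$ stable (hence $g$ is stable, being a product of a stable germ with an identity). Relabelling $F$ as the branch $f$ of the statement, the multigerm $h=\{f,g\}$ therefore has both branches stable, and by hypothesis $\widetilde\tau(f)=\{0\}$. Thus $h$ meets exactly the hypotheses of Corollary \ref{morse}.

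Next I would invoke Corollary \ref{morse} in contrapositive form: if $h$ is simple then $g$ must be a prism on a Morse function or an immersion. But the defining hypothesis of the present corollary is that the concatenation is non-monic, which the statement spells out as $g$ not being a prism on a Morse function or an immersion. These two assertions are mutually exclusive, so $h$ cannot be simple.

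There is essentially no analytic obstacle here, since the substance is carried by Proposition \ref{0n-2} and its consequence Corollary \ref{morse}. The one point that warrants a line of care is terminological: I would confirm that the dimension of the analytic stratum $\dim_{\mathbb K}\widetilde\tau(g)$ distinguishes exactly the monic case ($g$ a prism on a Morse function or an immersion, with $\dim_{\mathbb K}\widetilde\tau(g)=p-1$) from the non-monic case ($\dim_{\mathbb K}\widetilde\tau(g)\leq p-2$), so that ``non-monic'' really is the negation of the conclusion of Corollary \ref{morse} and no intermediate stable type is overlooked.
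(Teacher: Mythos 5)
Your proposal is correct and is essentially the paper's own (implicit) argument: the paper states Corollary \ref{maingenconc} without proof as an immediate consequence of Proposition \ref{0n-2} via Corollary \ref{morse}, which is exactly the deduction you make explicit by checking that both branches $F$ and $g=Id_{\mathbb K^{p-s}}\times\overline g$ of a generalised concatenation are stable and then taking the contrapositive of Corollary \ref{morse}.
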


The case $h=\{f,g\}$ with $f$ and $g$ stable and $1\leq
\dim\widetilde\tau(f),\dim\widetilde\tau(g)<p-1$ is not included in
the above results. Suppose $h$ is of type $A_{k_1\ldots k_r}$ from
$\K^n$ to $\K^n$ where $\sum_{i=1}^{r}k_i=n+1$. This implies that
$m_0(h)=\sum_{i=1}^{r}k_i+r=n+r+1<n+r+1+\frac{1}{n-1}=\frac{n^2+r(n-1)}{n-1}$,
which means that the multiplicity of such a multigerm is the maximum
possible below Nishimura's bound. We have the following

\begin{prop}
There exists a simple $h:(\K^n,S)\rightarrow (\K^n,0)$ of type
$A_{k_1\ldots k_r}$ with $\sum_{i=1}^{r}k_i=n+1$.
\end{prop}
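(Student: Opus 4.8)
The plan is to exhibit, for each $n\ge 3$, one explicit simple multigerm of the required type. A first witness for the literal statement is immediate: the $(n+1)$-germ of Example \ref{exaugconc} iii) consists of $n+1$ fold branches, so it has type $A_{1,\ldots,1}$ with $\sum_{i=1}^{n+1}k_i=n+1$ and it is simple. To stay in the motivating regime, where both pieces are genuinely singular and $1\le\dim\widetilde\tau(f),\dim\widetilde\tau(g)<p-1$, I would instead construct a bigerm $h=\{f,g\}$ from $\K^n$ to $\K^n$ with $f$ a stable cusp ($A_2$) and $g$ a stable Morin singularity of type $A_{n-1}$. Then $k_1+k_2=2+(n-1)=n+1$, while $\dim\widetilde\tau(f)=n-2$ and $\dim\widetilde\tau(g)=1$, both lying in the interval $[1,n-1)$ exactly when $n\ge 3$.

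Concretely, I would take $f(x,u,w_1,\ldots,w_{n-2})=(x^3+ux,u,w_1,\ldots,w_{n-2})$ and a copy of $g(x,v_1,\ldots,v_{n-1})=(x^n+v_1x+\cdots+v_{n-1}x^{n-1},v_1,\ldots,v_{n-1})$ at a second source point sharing the image $0$, and then fix the identification of the two target copies so that the cuspidal stratum of $f$ (dimension $n-2$) and the $A_{n-1}$-stratum of $g$ (a curve) are in the most generic admissible position at the origin. Since $\dim\widetilde\tau(f)+\dim\widetilde\tau(g)=n-1<n$, these strata cannot meet transversally, so $h$ is automatically non-stable, as required by the setting; all of the $\mathcal A$-codimension of $h$ then comes from the contact between the strata, the branches themselves being stable.

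To prove simplicity I would follow the pattern of the proof of Theorem \ref{simpleaugconc}. First, using that each branch is stable together with the codimension Lemma $\cod X=s+(r-1)p$ and Wilson's formula, I would compute $\mathcal A_e-\cod(h)$ and bound the stratum codimension of $h$ below by the same number; since the stratum codimension can never exceed the $\mathcal A_e$-codimension, the two are equal, which says the $\mathcal A$-orbit is open in its stratum and carries no modulus of its own type. Second, I would run an adjacency argument in the spirit of Lemma \ref{adja}: the only more degenerate neighbours of $h$ are produced by raising the order of contact between the two strata, which increases the codimension by a positive integer and hence is indexed discretely, so only finitely many $\mathcal A$-classes accumulate at $h$. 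Together these two steps give simplicity.

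The hard part will be ruling out a continuous modulus coming from the contact of the two singular strata; this is precisely the mechanism behind the cross-ratio invariant of a multiple point and behind the non-simplicity of the $A_2A_n$ bigerm in Example \ref{ejem0n-2} i). Thus the chosen relative position of $f$ and $g$ must be arranged to carry no projective (cross-ratio type) invariant, and this rigidity has to be proved, not merely assumed. For $n=3$, where $h$ is of type $A_{2,2}$, the claim can be checked directly against the classifications in \cite{roberta} and \cite{nosso}, giving a base case; for general $n$ the remaining work is to show that the induction on the order of contact terminates after finitely many classes, which is where the main effort lies.
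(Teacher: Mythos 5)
Your proposal has a genuine gap: in the regime you set out to treat, the simplicity of your candidate is never proved. Your all-fold witness (Example \ref{exaugconc} iii)) does satisfy the letter of the statement, but the proposition -- read with the paragraph preceding it, and as the paper's own proof confirms -- is meant to produce a simple multigerm of \emph{any} prescribed type $A_{k_1\ldots k_r}$ with $\sum_{i=1}^r k_i=n+1$, not of one convenient type. For your main construction, the bigerm $A_{2,n-1}$ with both branches stable and strata in ``the most generic admissible position'', every step that would constitute a proof is deferred: you concede that the absence of a cross-ratio-type modulus ``has to be proved, not merely assumed'' and that ``the remaining work is to show that the induction on the order of contact terminates''. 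That remaining work \emph{is} the proposition. Moreover, the codimension computation you propose (the jet-space Lemma plus Wilson's formula) cannot be carried out without first writing down the equations describing the contact between the two strata, and the adjacency argument in the spirit of Lemma \ref{adja} presupposes exactly the classification of nearby orbits that you are trying to avoid. Finally, ``most generic admissible position'' is never pinned down to a specific germ, so there is no object about which the claimed computations could even be performed.

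The paper closes this gap with one idea you did not use: binary concatenation. Given the prescribed type, split it into two groups with $k_{i_1}+\ldots+k_{i_s}=l$ and $k_{j_1}+\ldots+k_{j_{r-s}}=n+1-l$; by \cite{robertamond} there exist germs of these types of $\mathcal A_e$-codimension 1 in $\K^{l-1}$ and $\K^{n-l}$ respectively, and their binary concatenation is a multigerm of type $A_{k_1\ldots k_r}$ in $\K^n$ of $\mathcal A_e$-codimension 1, hence simple, since all codimension 1 multigerms in the nice dimensions are simple (\cite{robertamond}). Note that this construction realizes precisely the configuration you were aiming at -- two stable branches whose analytic strata meet non-transversally, with all the codimension coming from their relative position -- but simplicity comes for free from the codimension 1 theorem instead of from a contact-and-moduli analysis. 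If you want to keep your $A_{2,n-1}$ example, the fix is to take the relative position dictated by the binary concatenation rather than an unspecified ``generic'' one; then the germ has codimension 1 and no further argument is needed.
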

\begin{proof}
We can decompose $h$ in two stable germs $A_{k_{i_1}\ldots k_{i_s}}$
and $A_{k_{j_1}\ldots k_{j_{r-s}}}$ such that $k_{i_1}+\ldots
+k_{i_s}=l$ and $k_{j_1}+\ldots +k_{j_{r-s}}=n+1-l$. There exist
germs of type $A_{k_{i_1}\ldots k_{i_s}}$ and $A_{k_{j_1}\ldots
k_{j_{r-s}}}$ which have codimension 1 as germs in $\K^{l-1}$ and
$\K^{n-l}$ (\cite{robertamond}). With them we can construct a
codimension 1 binary concatenation which is of type $A_{k_1\ldots
k_r}$ in $\K^n$ and is therefore simple.
\end{proof}

A similar study can be done for the case of multigerms
$h:(\K^n,S)\rightarrow (\K^{n+1},0)$ where the multiplicity is the
maximum possible below Nishimura's bound
$N=\frac{(n+1)^2+r(n-1)}{2n-1}$.

Suppose $n=2l+1$. Since  $r\leq n+2=2l+3$ then for $(l,r)\neq
(1,2)$, $N=\frac{(2l+2)^2+2lr}{4l+1}\leq l+1+\frac{r}{2}$ when $r$
is even or $N\leq l+1+ \frac{r+1}{2}$ when $r$ is odd. In fact
$N=l+2+\frac{r}{2}-\frac{2l-4+r}{2(4l+1)}$ and
$0<\frac{2l-4+r}{2(4l+1)}\leq\frac{1}{2}$. If $l=1$ and $r=2$ then
$N=4$ and from \cite{cati} there is no simple bigerm $h=\{f,g\}$
with $f,g$ stable of multiplicity 4.

Now suppose $n=2l$. Then for $l\neq 1$ and $(l,r)\neq (2,1)$,
$N=\frac{(2l+1)^2+(2l-1)r}{4l-1}\leq [l+1+\frac{r}{2}]$. In fact
$N=l+1+\frac{r}{2}+\frac{l+2}{4l-1}-\frac{r}{2(4l+1)}$ and
$\frac{l+2}{4l-1}-\frac{r}{2(4l+1)}<\frac{1}{2}$. When $l=2$ and
$r=1$, there is not a stable monogerm of multiplicity 4. Suppose
$l=1$. If $r=3$ then $N=4$ and from \cite{roberta} the only simple
trigerms are those composed by 3 immersions and therefore have
multiplicity 3. If $r=2$ then $N=3$ and there are simple bigerms
whose branches are a cross-cap and an immersion (\cite{roberta}).
We have the following

\begin{prop}
There exists a simple multigerm $h:(\K^n,S)\rightarrow (\K^{n+1},0)$
with $m_0(h)=l+1+\frac{r}{2}$ when $n=2l$ or $n=2l+1$ and $r$ is
even, or $m_0(h)=l+1+\frac{r+1}{2}$ when $n=2l+1$ and $r$ is odd.
\end{prop}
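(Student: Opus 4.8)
The plan is to mirror the proof of the equidimensional Proposition above and establish this existence statement by an explicit construction: I would realize each prescribed multiplicity by a codimension~$1$ binary concatenation and then quote the theorem of Cooper, Mond and Wik-Atique that every $\mathcal{A}_e$-codimension~$1$ multigerm in the nice dimensions is simple (\cite{robertamond}). The mechanism that makes the multiplicities come out correctly is the additivity of $m_0$ over branches recorded in Section~2. If $h$ is the binary concatenation of two germs $f:(\K^{n_1},\cdot)\to(\K^{n_1+1},0)$ and $g:(\K^{n_2},\cdot)\to(\K^{n_2+1},0)$, then from the normal form of a binary concatenation one reads off $n=n_1+n_2+2$ and $m_0(h)=m_0(f)+m_0(g)$; note in particular that the target defect forces \emph{both} pieces to be of $(m,m+1)$-type. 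Writing the target value as $l+1+\tfrac{r}{2}=\tfrac{n}{2}+1+\tfrac{r}{2}$, it splits additively as $\bigl(\tfrac{n_1}{2}+1+\tfrac{s}{2}\bigr)+\bigl(\tfrac{n_2}{2}+1+\tfrac{r-s}{2}\bigr)$, so the quantity to be realized is itself additive under binary concatenation. This is exactly the feature that lets a single decomposition, as in the equidimensional case, reduce the problem to two lower-dimensional instances of the same statement.

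Concretely, I would decompose $h$ as the binary concatenation of two codimension~$1$ germs $f$ and $g$ in dimensions $(\K^{n_1},\K^{n_1+1})$ and $(\K^{n_2},\K^{n_2+1})$ with $n_1+n_2+2=n$, whose numbers of branches add up to $r$ and whose multiplicities add up to the prescribed value. For $n=2l$ the natural building block with $\widetilde\tau=\{0\}$ is the cross-cap-type monogerm of multiplicity $l+1$, and for $n=2l+1$ it is the bigerm formed by such a cross-cap together with one immersion --- precisely the configurations singled out in the proof of Proposition~\ref{0n-2}; the extra $\tfrac{1}{2}$ in $l+1+\tfrac{r+1}{2}$ for odd $n$ and odd $r$ is accounted for by that attached immersion. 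The existence of the two codimension~$1$ pieces with the required multiplicities and branch numbers is guaranteed by the classification of codimension~$1$ multigerms in \cite{robertamond}, the smallest instances (a cross-cap and an immersion from $\K^2$ to $\K^3$, and the corresponding low-dimensional bigerms and trigerms) being the simple germs listed in \cite{roberta} and \cite{cati}. Binary-concatenating $f$ and $g$ yields a codimension~$1$ multigerm $h:(\K^n,S)\to(\K^{n+1},0)$ with $r$ branches and, by additivity of $m_0$, multiplicity $l+1+\tfrac{r}{2}$ (respectively $l+1+\tfrac{r+1}{2}$); simplicity then follows from \cite{robertamond}, and since we only need to exhibit one simple germ of each multiplicity this suffices.

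The hard part will be the codimension bookkeeping for the concatenation together with the parity management of the split. I must verify, exactly as in the equidimensional Proposition, that the binary concatenation of two codimension~$1$ germs can be arranged to be codimension~$1$ again; this rests on the $\mathcal{A}_e$-codimension formula for concatenations and its technical transversality hypotheses, as developed in \cite{robertamond} and \cite{nosso}. I must also check that the decomposition $n=n_1+n_2+2$, $r=s+(r-s)$ can always be chosen so as to respect the parities of $n$ and of $r$ and to avoid the exceptional small pairs $(l,r)$ --- such as $(1,2)$ and $(2,1)$ --- that were excluded in the discussion preceding the statement, so that each half really is one of the admissible base or smaller cases. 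A secondary delicate point, already visible in Example~\ref{exaugconc}, is the contact between the zero-dimensional analytic stratum of a cross-cap branch and the hyperplane strata of the immersions: this contact is what forces positive codimension rather than stability, and one must confirm that in the concatenated normal form it contributes exactly $1$ to the codimension, so that simplicity via \cite{robertamond} is preserved throughout the decomposition.
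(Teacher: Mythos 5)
Your route is genuinely different from the paper's, and it contains a gap that is not just bookkeeping. The paper never uses concatenations here. For the parities ($n=2l+1$, $r$ even) and ($n=2l+2$, $r$ odd) it exhibits $h=\{f,g\}$ \emph{stable}: $f$ is a stable germ of multiplicity $l+1$ with $\frac{r}{2}$ (resp.\ $\frac{r-1}{2}$) branches, $g$ is a multigerm of $\frac{r}{2}$ (resp.\ $\frac{r+1}{2}$) immersions, and since $\cod\widetilde\tau(f)+\cod\widetilde\tau(g)$ equals the target dimension the analytic strata can be put in general position, so $h$ is stable by Mather's criterion and hence simple. For the remaining parities ($n=2l$, $r$ even; $n=2l+1$, $r$ odd) it takes a codimension $1$ germ whose versal unfolding is the stable germ just constructed, and quotes \cite{robertamond} (codimension $1$ in the nice dimensions implies simple). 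No codimension formula for concatenations, no transversality hypotheses, no recursion.

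The gap in your plan is the choice of building blocks, and it bites exactly in the cases your mechanism must handle. The pieces you name --- the cross-cap-type monogerm of multiplicity $l+1$, and that monogerm together with an immersion --- are \emph{stable} germs, whereas the theorem you want to apply (binary concatenation of codimension $1$ germs is codimension $1$, hence simple) requires \emph{both} inputs to have $\mathcal A_e$-codimension $1$; with a stable input it gives no conclusion at all. This cannot be patched by accepting a stable output: for a corank $1$ stable multigerm of type $A_{k_1\ldots k_r}$ in $(\K^n,\K^{n+1})$, regular intersection of the strata forces $\sum_i(2k_i+1)\leq n+1$, i.e.\ $m_0\leq\lfloor\frac{n+1+r}{2}\rfloor$, and in the parities ($n$ even, $r$ even) and ($n$ odd, $r$ odd) the multiplicity to be realised exceeds this bound by $1$. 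So in precisely those cases the germ must be non-stable, your concatenation must have both pieces of codimension $1$, and you then need codimension $1$ germs with extremal multiplicity in each smaller dimension and parity --- which for half the parities is the very statement being proved. Your recursion therefore needs an independent supply of such pieces (primitive codimension $1$ monogerms for $n_i$ odd and $r_i=1$, their augmentations for $n_i$ even, monic concatenations to add immersion branches, plus the small base cases that cannot be written as $n_1+n_2+2$), and at every gluing step the technical condition in the concatenation codimension formula (\cite[Theorem 3.8]{robertamond}; see the Remark after Theorem \ref{augconc}) must be verified, which you defer. Your arithmetic observations (both pieces forced to be of type $(m,m+1)$, $n=n_1+n_2+2$, additivity of $m_0$ and of the target quantity) are correct, so the plan could probably be completed, but it is strictly harder than, and structurally unlike, the paper's argument.
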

\begin{proof}
First suppose that $n=2l+1$ and $r$ is even. We can write
$h=\{f,g\}$ such that $m_0(f)=l+1$ and is stable. Notice that $1\leq
\dim\widetilde\tau(f)\leq l+1$.  Consider $g$ the multigerm of
$\frac{r}{2}$ immersions with $m_0(g)=\frac{r}{2}$ and take  $f$
with $\frac{r}{2}$ branches. Then $h$ has the desired multiplicity
and number of branches and is stable since the analytic strata have
regular intersection. In fact,
$\cod\widetilde\tau(f)=2l+2-\dim\widetilde\tau(f)=2l+2-\frac{r}{2}$
and
$\cod\widetilde\tau(g)=\cod\widetilde\tau(A_0^{\frac{r}{2}})=\frac{r}{2}$,
so we can always choose them in a way that they have regular
intersection. Obviously, any stable germ in the nice dimensions is
simple.

If $n=2l$ and $r$ is even then there exists  a codimension 1 germ
whose versal unfolding is the germ $h$ constructed above and
therefore is simple.

Now suppose $n=2l+2$ and $r$ is odd. Then
$[(l+1)+1+\frac{r}{2}]=l+1+\frac{r+1}{2}$. We can write $h=\{f,g\}$
such that $m_0(f)=l+1$ and is stable. This means that $2\leq
\dim\widetilde\tau(f)\leq l+2$. Similarly to the previous case,
consider $g$ the multigerm of $\frac{r+1}{2}$ immersions with
$m_0(g)=\frac{r+1}{2}$ and take  $f$ with $\frac{r-1}{2}$ branches,
then $h$ has the desired multiplicity and number of branches and is
stable since the analytic strata have regular intersection and
therefore simple.

If $n=2l+1$ and $r$ is odd then there exists  a  codimension 1 germ
whose versal unfolding is the germ $h$ constructed above and is
therefore simple.
\end{proof}

However, there are examples of multigerms with the highest possible
multiplicity below Nishimura's bound that are not simple:

\begin{ex}\label{exdosstable}
\begin{enumerate}
\item[i)]
Consider the codimension 2 trigerm
$\{(x^2,y),(x,y^2),(x,y^2+x^2)\}$. If we augment and concatenate it
we obtain the codimension 4 quadrigerm
\begin{equation}
\begin{cases}
(x^2,y,z)\\
(x,y^2,z)\\
(x,y^2+x^2+z^2,z)\\
(x,y,z^2)
\end{cases}
\end{equation}
If we take the first two branches as $f$ and the last two as $g$ we
have that $1=\dim\widetilde\tau(f)=\dim\widetilde\tau(g)$ but this
multigerm is not simple by Theorem \ref{simpleaugconc}. The same
example is valid for $(n,p)=(2,3)$ considering immersions instead of
folds.
\item[ii)]
Suppose we have a germ of type $A_{k_1\ldots k_r}$ from $\K^n$ to
$\K^n$ such that $\sum_{i=1}^{r}k_i=n+1$ and $k_{r-1}=k_r=1$. Since
$\sum_{i=1}^{r-2}k_i=n-1$, there exists a germ of type $A_{k_1\ldots
k_{r-2}}$ which has codimension 1 in $\K^{n-2}$. We can augment and
concatenate it with an augmenting function $\phi$ such that
$\tau(\phi)=t>1$ to obtain a germ in $\K^{n-1}$ of codimension
$t+1>2$. If we augment and concatenate this germ again we obtain a
non simple germ of type $A_{k_1\ldots k_r}$, provided the last fold
$A_{k_r}=A_1$ is transverse to the strata of the germ of type
$A_{k_1\ldots k_{r-1}}$.

\item[iii)]
By Theorem \ref{nishi}, two $A_{n-1}$ singularities in $\K^n$ are
simple only when $n\leq 3$ (two cuspidal edges, for example).
\end{enumerate}
\end{ex}

It follows by \cite{nishisimple} that if
$f:(\K^n,S)\rightarrow(\K^p,0)$ ($n\leq p$) is simple, then the
number of branches $r$ is bounded by $\frac{p^2}{n(p-n)}$. In the
equidimensional case this is not an upper bound. However, if we
consider only non-submersive branches we can prove the following.

\begin{prop}
Let $f=\{f_1,\ldots,f_r\}:(\K^n,S)\rightarrow (\K^n,0)$ be a germ of
type $A_{k_1,\ldots,k_r}$ with $|S|=r>1$ and $n\geq k_i\geq k_{i+1}$
$\forall i=1,\ldots,r-1$. If $f$ is simple, then $r\leq
n-k_1+2=n-m_0(f_1)+1$.
\end{prop}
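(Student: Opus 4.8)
The plan is to convert the multiplicity bound of Nishimura's Theorem~\ref{nishi} into a bound on the number of branches. Recall from Section~2 that an equidimensional corank~$1$ branch of type $A_{k_i}$ has multiplicity $m_0(f_i)=k_i+1$; in particular $m_0(f_1)=k_1+1$, and by the additivity of the multiplicity,
$m_0(f)=\sum_{i=1}^r m_0(f_i)=\sum_{i=1}^r(k_i+1)=\big(\sum_{i=1}^r k_i\big)+r$.
The second ingredient is that the branches are non-submersive, which forces $k_i\ge 1$ for every $i$; this, together with the ordering hypothesis, is what turns a bound on $\sum_i k_i$ into a bound on $r$.

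First I would apply Theorem~\ref{nishi} in the equidimensional setting $p=n$: the hypotheses hold, since $f$ is simple of corank $1$ (which is minimal) and $np=n^2\neq 1$. With $p=n$ the denominator $n(p-n)+n-1$ collapses to $n-1$ and $p^2=n^2$, so the bound reads $m_0(f)\le \frac{n^2}{n-1}+r$. Substituting $m_0(f)=\sum_i k_i+r$ cancels the term $r$ and leaves $\sum_{i=1}^r k_i\le \frac{n^2}{n-1}$. Writing $\frac{n^2}{n-1}=n+1+\frac{1}{n-1}$ and using that $\sum_i k_i$ is a positive integer, for $n\ge 3$ the fractional part $\frac{1}{n-1}$ is $<1$, so $\sum_{i=1}^r k_i\le n+1$. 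Finally, the ordering hypothesis and $k_i\ge 1$ give $\sum_{i=1}^r k_i\ge k_1+(r-1)$, whence $k_1+r-1\le n+1$, i.e. $r\le n-k_1+2$, as claimed.

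The main obstacle is the integer rounding at the bottom of the dimension range. For $n\ge 3$ one has $\frac{n^2}{n-1}<n+2$ strictly, the estimate $\sum_i k_i\le n+1$ is available, and the argument closes; moreover the bound is sharp there, since Example~\ref{exaugconc}(iii) produces simple multigerms with $n+1$ fold branches, i.e. $r=n+1=n-k_1+2$ when $k_1=1$. The delicate case is $n=2$, where $\frac{n^2}{n-1}=4$ exactly, so Nishimura only yields $\sum_i k_i\le 4$ and hence $r\le n-k_1+3$, one branch too many. Here the extremal configuration must be excluded by hand rather than by the multiplicity estimate: for $k_1=1$ a configuration with $r=4$ forces four fold discriminants through the origin, whose tangent directions carry a cross-ratio that is a continuous $\mathcal A$-invariant and so obstructs simplicity, while the remaining borderline types are settled by a direct check or by appeal to the classification of simple plane multigerms. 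I expect this low-dimensional boundary to be the only genuine difficulty, the regime $n\ge 3$ being exactly where the rounding delivers the sharp bound automatically.
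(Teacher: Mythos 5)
Your argument for the main range $n\ge 3$ is correct and takes a genuinely different route from the paper's. The paper proceeds by a case analysis on $k_1$ using analytic strata: for $k_1=1$ it invokes the non-simplicity of the $(n+2)$-tuple point (Example \ref{ejem0n-2} ii)), for $k_1=2$ it applies Proposition \ref{0n-2} to the submultigerm $\{f_2,\ldots,f_r\}$, and for general $k_1=k$ it counts dimensions of analytic strata and invokes Corollary \ref{morse}. You instead substitute the identity $m_0(f)=\sum_i k_i+r$ directly into Theorem \ref{nishi}, observe that the branch count $r$ cancels, obtain the integer bound $\sum_i k_i\le n+1$ for $n\ge 3$, and convert it into the branch bound using $k_i\ge 1$ and the ordering $k_1\ge k_i$. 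This is shorter, it makes no stability assumption on the branches or submultigerms (a hypothesis that Proposition \ref{0n-2} formally requires, and which the paper's $k_1=2$ and $k_1=k$ steps use only loosely), and it explains the sharpness for $k_1=1$ via Example \ref{exaugconc} iii) exactly as the paper does.

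Your treatment of $n=2$, however, contains a genuine error. The cross-ratio argument for four folds ($k_1=1$, $r=4$) is fine and is consistent with Example \ref{ejem0n-2} ii). But the claim that ``the remaining borderline types are settled by a direct check or by appeal to the classification of simple plane multigerms'' is false: the remaining borderline type at $n=2$ is $k_1=2$, $r=3$, i.e.\ $A_{2,1,1}$, a cusp together with two transverse folds, which violates the asserted bound $r\le n-k_1+2=2$ and nevertheless \emph{is} simple --- the paper itself records this trigerm as simple (from \cite{nosso}) in the proof of Proposition \ref{0n-2} and again in Example \ref{ejemcati} ii). So the proposition as literally stated fails at $n=2$, and no proof can close that case. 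This is not a defect of your method in particular: the paper's own proof tacitly assumes $n\ge 3$, since its $k_1=2$ step invokes Proposition \ref{0n-2}, whose hypotheses exclude $n=p=2$. The correct repair is to restrict the statement (and your proof) to $n\ge 3$ and delete the attempted patch at $n=2$, rather than to try to complete it.
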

\begin{proof}
If $k_1=1$, then all the other branches are also fold singularities.
From Example \ref{ejem0n-2}, a simple multigerm with only fold
singularities can have at most $n+1$ branches.

If $k_1=2$,  from Proposition \ref{0n-2} since $f$ is simple then
$\dim \widetilde\tau(f')>0$ where $f'=\{f_2,\ldots,f_r\}$. Therefore
$f'$  has at most $n-1$ branches, and so $r\leq n$. In fact the best
multigerm  that has  analytic stratum zero is the $n$-tuple
transversal point.

If $k_1=k\leq n$, then $\dim\widetilde\tau(f_1)=n-k$. In the best of
the cases, the remaining branches are folds. Suppose we take $n-k$
transversal folds whose intersection has dimension $k$. Then
$\widetilde\tau(\{f_1,A_1^{n-k}\})=\{0\}$, and so, by Corollary
\ref{morse}, there is just one more branch and is  a prism on a
Morse function. Therefore $r\leq n-k+1+1=n-k+2$.
\end{proof}

\subsection{Multigerms with a non-stable branch}

We study here germs $h=\{f,g\}$ where $f$ is a non-stable primitive
germ. We classify all simple germs where $g$ is a prism on a Morse
function or an immersion and give some results for the general case.

\begin{coro}\label{prim1}
Let $f=\{f_1,\ldots,f_r\}:(\K^n,S)\rightarrow (\K^n,0)$ be a
primitive $\mathcal A_e$-codimension 1 germ, $n>2$. Then the
multigerm $h=\{f,A_1\}$ is not simple.
\end{coro}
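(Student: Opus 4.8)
The plan is to derive a contradiction with Nishimura's multiplicity bound (Theorem \ref{nishi}) under the assumption that $h$ is simple, in the same spirit as the proof of Proposition \ref{0n-2}. The whole argument reduces to computing $m_0(f)$, hence $m_0(h)$, and checking that the latter overshoots the bound. So my first task is to pin down the multiplicity of the primitive branch $f$.

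Since $f$ is $\mathcal{A}_e$-codimension $1$ it admits a $1$-parameter stable unfolding $F:(\K^{n+1},S)\rightarrow(\K^{n+1},0)$, and because $f$ is primitive, \cite{houston} gives $\widetilde\tau(F)=\{0\}$; that is, the analytic stratum of the stable germ $F$ is just a point. As $f$, and hence $F$, is corank $1$ of type $A_{k_1,\ldots,k_r}$, each branch $F_i$ is a stable $A_{k_i}$ germ with $m_0(F_i)=m_0(f_i)=k_i+1$ and $\cod\widetilde\tau(F_i)=k_i$ in $\K^{n+1}$. By Mather's stability criterion (\cite{mather}) the strata $\widetilde\tau(F_i)$ meet transversally, so a zero-dimensional intersection forces $\sum_{i=1}^r k_i=n+1$. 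Multiplicity is preserved under unfolding, whence
$$m_0(f)=m_0(F)=\sum_{i=1}^r (k_i+1)=n+1+r.$$

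Next I would adjoin the fold $A_1$, a prism on a Morse function with $m_0(A_1)=2$, to get a multigerm $h$ with $r+1$ branches and $m_0(h)=m_0(f)+2=n+r+3$. Assuming $h$ is simple, Theorem \ref{nishi} with $n=p$ and $r+1$ branches yields
$$m_0(h)\le \frac{n^2+(n-1)(r+1)}{n-1}=n+r+2+\frac{1}{n-1}.$$
For $n>2$ one has $\tfrac{1}{n-1}<1$, so the integer $m_0(h)$ must satisfy $m_0(h)\le n+r+2$, contradicting $m_0(h)=n+r+3$. Hence $h$ cannot be simple.

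The main obstacle is the first computation, $m_0(f)=n+1+r$: everything hinges on converting ``primitive'' into the vanishing of the analytic stratum of the stable unfolding via \cite{houston}, and then reading off the multiplicity from the transversality of the strata of the $A_{k_i}$ branches. Once this is in place the Nishimura estimate finishes the argument, and the sharpness of the hypothesis $n>2$ becomes transparent, since at $n=2$ we get $\tfrac{1}{n-1}=1$ and the bound is met rather than violated, matching the remark that such multigerms are only \emph{almost always} non-simple.
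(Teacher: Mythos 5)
Your proof is correct, and its endgame coincides exactly with the paper's: assuming $h$ simple, $m_0(h)=n+r+3$ violates Nishimura's bound $\frac{n^2+(n-1)(r+1)}{n-1}=n+r+2+\frac{1}{n-1}$ as soon as $n>2$, and equality at $n=2$ explains the hypothesis. The genuine difference is how $m_0(f)=n+1+r$ is justified. The paper splits into cases and quotes classification results: $m_0(f)=n+2$ for primitive monogerms (\cite{riegerruas}), and for $r>1$ the fact (\cite{robertamond}) that all branches of a primitive $\mathcal A_e$-codimension 1 multigerm are stable, with the key equality $\sum_i k_i=n+1$ left implicit. You instead work uniformly inside the one-parameter stable unfolding $F$: primitivity gives $\widetilde\tau(F)=\{0\}$, the branches of $F$ are stable $A_{k_i}$ germs whose analytic strata have codimension $k_i$ in $\K^{n+1}$, and Mather's regular-intersection criterion forces $\sum_i k_i=n+1$, whence $m_0(f)=m_0(F)=n+1+r$. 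This is the same mechanism the paper itself uses for monogerms in the proof of Proposition \ref{kequiv}, and your version both unifies the monogerm and multigerm cases and supplies the dimension count the paper leaves implicit; that is what your route buys. The one step needing care is the appeal to \cite{houston} when $r>1$: in this paper Houston's result (primitive plus one-parameter stable unfolding implies $\widetilde\tau(F)=\{0\}$) is only ever invoked for monogerms, and the authors deliberately route the multigerm case through \cite{robertamond} instead. The implication you need is true in the present setting, but if \cite{houston} states it only for monogerms you should replace that citation by the paper's: by \cite{robertamond} a primitive codimension 1 multigerm has all branches stable and is a monic or binary concatenation, and these have $\sum_i k_i=n+1$, whereas the codimension 1 configurations with $\sum_i k_i\leq n$ (tangencies of the analytic strata) are augmentations, hence excluded by primitivity.
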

\begin{proof}
If $f$ is a multigerm, from \cite{robertamond} $f_i$ is stable for
all $i=1,\ldots,r$, so $h=A_{k_1,\ldots,k_{r},1}$ and
$m_0(h)=n+1+r+2$. If $f$ is a monogerm, $m_0(f)=n+2$
(\cite{riegerruas}) and $m_0(h)=n+2+2=n+1+r+2$. Suppose that $h$ is
simple. By Nishimura's result $n+r+3\leq\frac{n^2+(n-1)(r+1)}{n-1}$
and so $n\leq 2$.
\end{proof}

\begin{coro}\label{prim2}
Let $f:(\K^n,0)\rightarrow (\K^{n+1},0)$ be a primitive $\mathcal
A_e$-codimension 1 germ, $n>3$. Then the multigerm $h=\{f,A_0\}$ is
not simple.
\end{coro}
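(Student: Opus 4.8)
The plan is to follow the same strategy as Corollary \ref{prim1}, now with target dimension $p=n+1$ and with the extra branch an immersion rather than a fold, and to extract non-simplicity from a clash with Nishimura's multiplicity bound (Theorem \ref{nishi}).

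First I would determine $m_0(f)$. Because $f$ has $\mathcal A_e$-codimension $1$, its miniversal unfolding $F$ is a $1$-parameter stable unfolding, and since $f$ is primitive, \cite{houston} gives $\widetilde\tau(F)=\{0\}$. This is exactly the configuration treated in the case $p=n+1$ of the proof of Proposition \ref{kequiv}: the vanishing of $\widetilde\tau(F)$ forces $n$ to be odd, say $n=2l+1$, and yields $m_0(f)=m_0(F)=l+2$. (If $n$ is even no such primitive germ exists and the statement is vacuous, so only the odd case needs attention.)

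Next, since $A_0$ is an immersion we have $m_0(A_0)=1$, and multiplicity is additive over branches (Section 2), so $m_0(h)=m_0(f)+1=l+3=\tfrac{n+5}{2}$, with $h$ having $r=2$ branches. Then I would feed these data into Theorem \ref{nishi} with $p=n+1$, where $n(p-n)+n-1=2n-1$, obtaining for any simple $h$
$$m_0(h)\leq\frac{(n+1)^2+2(n-1)}{2n-1}=\frac{n^2+4n-1}{2n-1}.$$
Substituting $m_0(h)=\tfrac{n+5}{2}$ and clearing the positive denominators collapses this to $9n-5\leq 8n-2$, i.e. $n\leq 3$, contradicting the hypothesis $n>3$; hence $h$ is not simple.

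The single non-routine ingredient is the multiplicity $m_0(f)=l+2$, and this is already supplied by the $p=n+1$ analysis behind Proposition \ref{kequiv}; once it is available everything else is the branchwise additivity of $m_0$ together with a one-line manipulation of Nishimura's inequality, just as in Corollary \ref{prim1}. The only thing I would still verify is that the hypotheses of Theorem \ref{nishi} hold here: $n\leq p$, $np\neq 1$, corank $1$ (which is minimal, and is our standing assumption), and $\mathcal A$-simplicity, the last being precisely the property we assume for contradiction.
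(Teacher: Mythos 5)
Your proof is correct and follows essentially the same route as the paper: both establish $m_0(f)=\frac{n+3}{2}$ with $n$ odd (the paper cites \cite{robertamond} for this, while you re-derive it from the $\widetilde\tau(F)=\{0\}$ argument behind the $p=n+1$ case of Proposition \ref{kequiv}, which gives the same value $l+2$ with $n=2l+1$), and then apply Theorem \ref{nishi} with $r=2$, $p=n+1$ to the multiplicity $m_0(h)=\frac{n+5}{2}$, forcing $n\leq 3$. The arithmetic and the conclusion agree exactly with the paper's proof.
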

\begin{proof}
From \cite{robertamond} we know that $m_0(f)=\frac{n+3}{2}$ and that
$n$ is odd, since there are no primitive $\mathcal A_e$-codimension
1 when $n$ is even. Suppose that $h$ is simple. By Nishimura's
result $\frac{n+3}{2}+1\leq\frac{(n+1)^2+2(n-1)}{2n-1}$ and so
$n\leq 3$.
\end{proof}

These results can be deduced from the proof of \cite[Propostion
5.9]{nosso} which states that if $h=\{f,g\}$ is a multigerm with $f$
a primitive monogerm of $\mathcal A_e$-codimension 1 and $g$ a prism
on a Morse function or an immersion, then $h$ has codimension
greater than or equal to $p$ when $n\geq p$ and greater than or
equal to $\frac{p}{2}$ when $n=p-1$.

\begin{ex}\label{ejemcati}
\begin{enumerate}
\item[i)]
When $p=1$, the bigerm of a Morse function and an $A_2$-singularity
and the trigerm of 3 Morse functions have codimension 2 and are
simple.
\item[ii)]
If $n=1$ and $p=2$, there is the simple codimension 2 bigerm
$\{(x^2,x^3),(0,x)\}$, and if $n=p=2$ there are the simple
codimension 2 bigerm
\begin{equation}
\begin{cases}
(x^4+yx,y)\\
(x,y^2+x)
\end{cases}
\end{equation}
and the trigerm
\begin{equation}
\begin{cases}
(x^3+xy,x)\\
(x,y^2)\\
(x,y^2+x)
\end{cases}
\end{equation}

\item[iii)]
In the equidimensional case, given the bigerm
\begin{equation}
\begin{cases}
(x_1^{n+2}+x_2x_1+\ldots+x_nx_1^{n-1},x_2,\ldots,x_n)\\
(x_1,\ldots,x_{n-1},x_{n}^2+x_{n-1})
\end{cases}
\end{equation}
the codimension is exactly $n$ (except when $n=1$, see case 1)
above) and is non-simple when $n>2$.

\item[iv)] When $(n,p)=(3,4)$, the bigerm
\begin{equation}
\begin{cases}
(u,v,x^3+ux,x^4+vx)\\
(u,u,v,x)
\end{cases}
\end{equation}
has codimension 2 and is simple (\cite{cati}). There are no
primitive codimension 1 multigerms in these dimensions.

\item[v)]
When $(n,p)=(2,3)$, a cross-cap and two immersions or a quintuple
point are not simple (\cite{hobbskirk}, \cite{roberta}).
\end{enumerate}
\end{ex}

\begin{teo}
Let $h=\{f,g\}$ is a multigerm with $f$ a non stable germ and $g$ a
prism on a Morse function or an immersion and suppose that $g$ is
transverse to the limits of the tangent spaces of $f$. Then $h$ is
simple if and only if either $f$ is an augmentation of an $\mathcal
A_e$-codimension 1 germ (i.e. $h=\{A_{P,\phi}(p),g\}$ with $\mathcal
A_e-\cod(p)=1$) or $h$ is one of examples $i),ii)$ or $iv)$ above.
\end{teo}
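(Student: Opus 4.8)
The plan is to prove the two directions separately, using the codimension machinery and simplicity results already established. For the ``if'' direction, suppose first that $f=A_{P,\phi}(p)$ is an augmentation of an $\mathcal A_e$-codimension 1 germ $p$. The idea is to reduce to the situation of Theorem \ref{simpleaugconc}. Since $g$ is a prism on a Morse function or an immersion transverse to the limits of the tangent spaces of the strata of $f$, the bigerm $h=\{f,g\}$ is precisely a simultaneous augmentation and concatenation of the form treated there. Provided the technical hypotheses of Theorem \ref{augconc} hold (quasi-homogeneity of $\phi$ and the lifting condition $\langle dZ(i^*(Lift(A_{P,\phi}(p))))\rangle=\langle dZ(i^*(Lift(P)))\rangle$), Theorem \ref{simpleaugconc} gives that $\mathcal A_e\text{-}\cod(p)=1$ forces $\{A_{P,\phi}(p),g\}$ to be simple. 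Hence $h$ is simple. The cases where $h$ is one of the examples $i),ii)$ or $iv)$ are simple by direct inspection, as recorded in those examples (citing \cite{hobbskirk}, \cite{roberta}, \cite{cati}).

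For the ``only if'' direction, I would argue by contraposition: assume $f$ is neither an augmentation of an $\mathcal A_e$-codimension 1 germ nor one of the listed low-dimensional exceptional cases, and show $h$ is not simple. First dispose of the primitive case. If $f$ is primitive and non-stable, then in order for $h=\{f,g\}$ to even be a candidate for simplicity we need $f$ to admit a stable unfolding; the multiplicity estimates show $m_0(f)$ is large (via $\widetilde\tau(F)=\{0\}$ and Houston's result \cite{houston}, as in Proposition \ref{kequiv}). Corollaries \ref{prim1} and \ref{prim2} then handle the equidimensional case for $n>2$ and the $(n,n+1)$ case for $n>3$ respectively: a primitive $\mathcal A_e$-codimension 1 germ together with a fold or immersion is non-simple in those dimension ranges. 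The remaining small dimensions ($n\leq 2$ equidimensional, $n\leq 3$ for $n=p-1$, and $p=1$) are exactly where the exceptional examples $i),ii),iv)$ live, and these are enumerated and checked directly.

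It remains to treat the case where $f$ is non-stable but \emph{not} primitive, i.e. $f$ is itself an augmentation. Write $f=A_{P,\phi}(p)$. If $p$ is a \emph{monogerm}, then since $g$ is transverse to the limits of the tangent spaces of $f$, the converse part of Theorem \ref{simpleaugconc} applies: $\{A_{P,\phi}(p),g\}$ simple forces $\mathcal A_e\text{-}\cod(p)=1$, contradicting our assumption that $f$ is not an augmentation of a codimension 1 germ. Thus if $\mathcal A_e\text{-}\cod(p)\geq 2$, the multigerm $h$ is non-simple. One should also rule out the degenerate possibility that $\phi$ is not simple, since by Proposition \ref{kequiv} (and Section 3) the non-simplicity of the augmenting function already forces $f$ itself to be non-simple, whence $h$ is non-simple. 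The main obstacle is controlling the transversality and lifting hypotheses uniformly: the converse in Theorem \ref{simpleaugconc} requires $g$ to be transverse to the limits of the tangent spaces of the strata of $A_{P,\phi}(p)$, which is exactly the hypothesis we are given, so the argument goes through; but verifying that every non-stable, non-primitive $f$ outside the exceptional list falls under the scope of Theorem \ref{simpleaugconc} (rather than leaving a gap for augmentations of multigerms, where only partial results are available as noted in the Remark following Proposition \ref{kequiv}) is the delicate point and will likely require restricting attention, via the structure theory of Section 4, to the cases actually realizable as simple bigerms $\{f,g\}$ with $g$ a prism on a Morse function or an immersion.
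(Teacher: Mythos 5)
Your proposal follows essentially the same route as the paper: the paper's own proof is a one-line appeal to Theorem \ref{simpleaugconc} (handling both directions when $f$ is an augmentation) together with Corollaries \ref{prim1} and \ref{prim2} (handling the primitive case), with the low-dimensional exceptions being exactly the listed examples. The caveats you flag -- quasi-homogeneity of $\phi$, the lifting condition from Theorem \ref{augconc}, and the multigerm-augmentation case of Proposition \ref{kequiv} -- are precisely the hypotheses the paper leaves implicit, so your write-up is the same argument made more explicit.
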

\begin{proof}
Follows directly from Theorem \ref{simpleaugconc} and Corollaries
\ref{prim1} and \ref{prim2}.
\end{proof}

Example \ref{ejemcati} shows that simple multigerms $h=\{f,g\}$
where $f$ is a primitive monogerm and $g$ is a prism on a Morse
function or an immersion are exceptional. We expect that if $g$ is a
more degenerate stable singularity, $h$ will not be simple. In what
follows we discuss the case where $f$ is an augmentation and $g$ is
more degenerate than prism on a Morse function or an immersion.

\begin{coro}
Let $A_{F,\phi}(f):(\mathbb K^n,0)\rightarrow (\mathbb K^p,0)$ be an
augmentation and $g$ be a cuspidal edge or two transversal folds
(when $n\geq p$) or two transversal immersions (when $n=p-1$). If
$m_0(A_{F,\phi}(f))>\frac{n^2-n+1}{n-1}$ (when $n\geq p$) or
$m_0(A_{F,\phi}(f))>\frac{n^2+n}{2n-1}$ (when $n=p-1$) then the
multigerm $\{A_{F,\phi}(f),g\}$ is non simple.
\end{coro}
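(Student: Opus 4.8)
The plan is to argue by contradiction using Nishimura's multiplicity bound (Theorem~\ref{nishi}) applied to $h=\{A_{F,\phi}(f),g\}$, in exactly the manner of Proposition~\ref{0n-2} and Corollaries~\ref{prim1} and~\ref{prim2}. Suppose $h$ is simple. Since $A_{F,\phi}(f)$ is a corank $1$ monogerm and $g$ is assembled from folds, immersions or an $A_2$ edge, the multigerm $h$ has corank $1$, which is minimal corank, and $np\neq 1$; thus Theorem~\ref{nishi} is applicable whenever $n\leq p$. When $n>p$, I would first invoke the remark after Lemma~\ref{lemma1} (\cite{riegerruas}) to replace $h$ by the equidimensional germ obtained by adjoining squares in the extra variables: this preserves the type $A_{k_1,\ldots,k_r}$ and hence the invariant $m_0=\sum k_i+r$ appearing in the hypothesis, so it suffices to treat the cases $n=p$ and $n=p-1$ directly.

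For $n=p$, I use that $A_{F,\phi}(f)$ is a single branch together with the additivity $m_0(h)=m_0(A_{F,\phi}(f))+m_0(g)$ recorded in the Notation section. If $g$ is a cuspidal edge ($A_2$) then $h$ has $r=2$ branches and $m_0(g)=3$; if $g$ is two transversal folds ($A_1^2$) then $r=3$ and $m_0(g)=4$. In either case $r-m_0(g)=-1$, so substituting into the equidimensional form of Nishimura's bound, $m_0(h)\leq\frac{n^2+(n-1)r}{n-1}=\frac{n^2}{n-1}+r$, gives
$$m_0(A_{F,\phi}(f))\leq \frac{n^2}{n-1}+r-m_0(g)=\frac{n^2}{n-1}-1=\frac{n^2-n+1}{n-1},$$
contradicting the hypothesis. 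Hence $h$ is not simple.

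For $n=p-1$, here $g$ is two transversal immersions, so $r=3$ and $m_0(g)=2$. Setting $p=n+1$ in Nishimura's bound, $m_0(h)\leq\frac{(n+1)^2+(n-1)\cdot 3}{2n-1}=\frac{n^2+5n-2}{2n-1}$, and subtracting $m_0(g)=2$ yields
$$m_0(A_{F,\phi}(f))\leq\frac{n^2+5n-2}{2n-1}-2=\frac{n^2+n}{2n-1},$$
again contradicting the hypothesis. The displayed inequalities are routine arithmetic; the two places where care is needed, and where I expect the main scope for error, are (i) the reduction from $n>p$ to $n=p$, where one must check that adjoining quadratic squares leaves $m_0$ unchanged, and (ii) the bookkeeping that $A_{F,\phi}(f)$ contributes exactly one branch, so that the branch count $r$ and the additive split of $m_0$ are correct for each admissible $g$.
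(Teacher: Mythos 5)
Your proposal is correct and follows essentially the same route as the paper: assume simplicity, apply Nishimura's bound (Theorem \ref{nishi}) to $h=\{A_{F,\phi}(f),g\}$, use additivity of multiplicity with $(r,m_0(g))=(2,3)$, $(3,4)$ or $(3,2)$, and derive the stated contradiction. The only differences are cosmetic: you unify the two equidimensional cases via $r-m_0(g)=-1$ and you spell out the reduction of $n>p$ to $n=p$ via the quadratic-suspension remark, which the paper leaves implicit by treating only $n=p$ and $n=p-1$.
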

\begin{proof}
Suppose $h=\{A_{F,\phi}(f),g\}$ is simple. First suppose $n=p$, by
Nishimura's result $m_0(h)\leq\frac{n^2+r(n-1)}{n-1}$. If $g$ is a
cuspidal edge we have
$$m_0(A_{F,\phi}(f))+m_0(g)=m_0(A_{F,\phi}(f))+3=m_0(h)\leq\frac{n^2+2(n-1)}{n-1},$$
which implies $m_0(A_{F,\phi}(f))\leq\frac{n^2+r(n-1)}{n-1}$. The
case where $g$ is two transversal folds follows similarly by using
$m_0(g)=4$ and $r=3$.

If $n=p-1$, then $r=3$ and $m_0(g)=2$, and the result follows similarly.
\end{proof}

\begin{ex}\label{exaugcusp}
\begin{enumerate}
\item[i)] The bigerms
\begin{equation}
\begin{cases}
(x^3+(y^2+z^l)x,y,z)\\
(x,y,z^3+yz)
\end{cases}
\end{equation}
have codimension $l+1$ and are simple. The versal unfolding can be
obtained similarly to the proof of Theorem 4.12 in \cite{nosso}.

\item[ii)] The trigerms
\begin{equation}
\begin{cases}
(x,y,z^2)\\
(x,y,z^2+y^2+x^l)\\
(x^3+yx,y,z)
\end{cases}
\end{equation}
have codimension $l+1$ and are simple, by the same argument as above.

\item[iii)] The trigerms
\begin{equation}
\begin{cases}
(x^3+(y^2+z^l)x,y,z)\\
(x,y^2,z)\\
(x,y,z^2)
\end{cases}
\end{equation}
are augmentation and concatenation of the codimension 2 bigerm
$\{(x^3+y^2x,y),(x,y^2)\}$ and so have codimension $2l$ and are
non-simple.

\item[iv)] The bigerms
\begin{equation}
\begin{cases}
(x^4+yx+z^lx,y,z)\\
(x,y,z^3+yz)
\end{cases}
\end{equation}
are non simple.
\end{enumerate}
\end{ex}

\section{Simple multigerms from $\mathbb C^3$ to $\mathbb C^3$}

In this section we obtain a list which includes all simple
multigerms from $\mathbb C^3$ to $\mathbb C^3$ using our results and
some simple calculations.

\subsection{Monogerms}

The following table, obtained by W. L. Marar $\&$ F. Tari in
\cite{marartari} and earlier by V. Goryunov in \cite{Goryunov},
contains a list of normal forms for simple corank 1 monogerms of
maps from $\R^3$ to $\R^3$.

\hspace{1cm}\begin{tabular}{| c | c | c |} \hline Name & Normal
form & $\mathcal{A}_e$-codimension \\ \hline
$A_1$ & $(x,y,z^2)$ & 0 \\ \hline $3_{\mu(P)}$ & $(x,y,z^3+P(x,y)z)$ & $\mu(P)$ \\
\hline $4_1^k$ & $(x,y,z^4+xz \pm y^k z^2),k\geq1$ & $k-1$ \\
\hline $4_2^k$ &
$(x,y,z^4+(y^2 \pm x^k)z+x z^2),k\geq2$ & $k$ \\
\hline $5_1$ & $(x,y,z^5+xz+ y z^2)$ & $1$ \\ \hline $5_2$ &
$(x,y,z^5+xz+ y^2 z^2+y z^3)$ & $2$ \\
\hline
\end{tabular} \\ \\

Here $P(x,y)$ are simple functions in two variables and $\mu(P)$
denotes the Milnor number of $P$.  We use the standard notation
$A_2$ for the cuspidal edge $3_0$ and $A_3$ for the swallowtail
$4_1^1$.

\subsection{Bigerms}

We consider bigerms $h=\{f,g\}$.

We study first the case where $f$ is non stable. Suppose $f$ is an
augmentation and $g$ is a fold singularity $A_1$. When $h$ is an
augmentation and concatenation and from Theorem \ref{simpleaugconc}
we know that if $f$ is an augmentation of a codimension 1 germ then
$h$ is simple. So augmenting the codimension 1 germs $(x^3+y^2x,y)$
and $(x^4+yx,y)$ we obtain the families of simple germs $3_{\mu}A_1$
with $P$ an $A_{\mu}$ singularity and $4_1^kA_1$:

\begin{equation}
\begin{cases}
(x^3+(y^2+z^{\mu+1})x,y,z)\\
(x,y,z^2)
\end{cases}
\text{ and }
\begin{cases}
(x^4+yx+z^kx^2,y,z)\\
(x,y,z^2)
\end{cases}
\end{equation}

If we augment and concatenate the codimension $\mu$ germ
$(x^3+z^{\mu+1}x,z)$, since $(x,y^2,z)$ is not transversal to the
limits of the strata of the augmentations, we must consider
$\{(x^3+(y^2+z^{\mu+1})x,y,z),(x,y^2,z)\}$.

The $3_{\mu}$ cases where $P$ is a $D_k$ or $E_i$ singularity with
$k\geq 4$ and $i=6,7,8$ can be seen as augmentations of the
codimension 2 germ $(x^3+y^3x,y)$ and the $4_2^k$ cases can be seen
as augmentations of the codimension 2 germ $(x^4+y^2x+yx^2,y)$. In
all these cases, $(x,y,z^2)$ is transverse to the corresponding
strata, so the corresponding bigerm is not simple.

There are no simple germs in this case when $h$ is not an
augmentation and concatenation.

Suppose $g$ is not a fold singularity. Since $m_0(f)\geq 3$, from
Nishimura's bound we have that $m_0(g)\leq 3$ so the only
possibilities are $3_{\mu}A_2$ singularities. Following the
calculations in Example \ref{exaugcusp} i) and the fact that
$3_{\mu}A_1$ is not simple if $P$ is not an $A_{\mu}$ singularity,
these bigerms are only simple when the function $P$ in $3_{\mu}$ has
an $A_{\mu}$ singularity.

If $f$ is primitive, from Corollary \ref{prim1}, there are no simple
bigerms.

Now suppose that $f$ and $g$ are stable. First suppose that both are
$A_1$ singularities. From \cite[Proposition 3.7 and Corollary
3.8]{nosso}, $h$ must be an augmentation. It is well known that a
bigerm with two fold singularities is simple if and only if they are
transversal ($A_1^2$) or they have a simple contact (the contact
function is simple). The only possibilities are

\begin{equation}
\begin{cases}
(x,y,z^2)\\
(x,y,z^2+h(x,y))
\end{cases}
\end{equation}
where $h(x,y)$ is a simple function singularity.

We need the following Lemma to proceed which is an equidimensional
version of a Theorem in \cite{roberta}

\begin{lem}\label{lemrob}
Let $h=\{f,g\}$ and $h'=\{f',g\}$ be finitely determined germs.
Consider $_V\mathcal K$ the subgroup of the group $\mathcal K$ whose
diffeomorphism in the source preserves $V$, where $V$ is the
discriminant of $g$. If $h$ and $h'$ are $\mathcal A$-equivalent
then $\lambda$ is $_V\mathcal K$-equivalent to $\lambda'$, where
$\lambda,\, \lambda'\in \mathcal O_{3}$ are reduced defining
equations for the discriminants of $f$ and $f'$ respectively.
\end{lem}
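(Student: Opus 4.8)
The plan is to unwind the $\mathcal{A}$-equivalence $h\sim_{\mathcal A}h'$ into its source and target parts and track its effect on the discriminants branch by branch. Such an equivalence supplies a germ of diffeomorphism $\Phi$ of the target $(\mathbb{C}^3,0)$ together with a germ of diffeomorphism of the source at the two branch points. Since $g$ is the distinguished common branch -- and in every case in which the lemma is applied $f$ and $f'$ are not $\mathcal{A}$-equivalent to $g$, so the two branches cannot be interchanged -- the source diffeomorphism carries the $g$-branch of $h$ to the $g$-branch of $h'$ and the $f$-branch to the $f'$-branch. Recording these relations branchwise, I obtain local source diffeomorphisms $\psi_1,\psi_2$ with $\Phi\circ f=f'\circ\psi_1$ and $\Phi\circ g=g\circ\psi_2$.

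First I would read off the action on the singular sets. Composition with a diffeomorphism on the left leaves the critical set in the source unchanged, so $\Sigma_g=\Sigma_{\Phi\circ g}=\Sigma_{g\circ\psi_2}=\psi_2^{-1}(\Sigma_g)$, forcing $\psi_2(\Sigma_g)=\Sigma_g$; hence $\Phi(V)=\Phi(g(\Sigma_g))=g(\psi_2(\Sigma_g))=g(\Sigma_g)=V$, so $\Phi$ preserves the discriminant $V$ of $g$. The identical computation applied to $\Phi\circ f=f'\circ\psi_1$ gives $\Phi(\mathrm{disc}(f))=\mathrm{disc}(f')$ as germs of hypersurfaces in the target.

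It then remains to upgrade this equality of varieties to contact equivalence of the chosen reduced defining equations. The function $\lambda'\circ\Phi$ is again reduced and its zero set is $\Phi^{-1}(\mathrm{disc}(f'))=\mathrm{disc}(f)$, the same hypersurface germ cut out by $\lambda$. Here I would invoke that $\mathcal{O}_3=\mathbb{C}\{X_1,X_2,X_3\}$ is a unique factorisation domain, so two reduced functions with the same zero set are associates; thus $\lambda'\circ\Phi=u\,\lambda$ for some unit $u\in\mathcal{O}_3$. This is precisely $\mathcal{K}$-equivalence of $\lambda$ and $\lambda'$ realised by the target diffeomorphism $\Phi$, and since $\Phi$ preserves $V$, the source diffeomorphism of this contact equivalence preserves $V$; hence $\lambda$ is ${}_V\mathcal K$-equivalent to $\lambda'$, as claimed.

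The step I expect to be the crux is this last passage from a diffeomorphism of the discriminant varieties to genuine $\mathcal{K}$-equivalence of the reduced equations, where reducedness together with the Nullstellensatz (via the UFD structure of $\mathcal{O}_3$) is essential; this is also the point at which the equidimensional argument diverges from the $\mathbb{R}^2\to\mathbb{R}^3$ version of \cite{roberta}, where the discriminant is an object of a different nature. A secondary check is that the labelling of the branches is genuinely respected by the equivalence: I would justify this by the distinctness of the branch types in the intended applications, noting that an interchange would force $f\sim_{\mathcal A}g\sim_{\mathcal A}f'$, a degeneracy that does not occur there.
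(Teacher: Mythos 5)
Your proof is correct and takes essentially the same route as the paper's: decompose the $\mathcal{A}$-equivalence so that the target diffeomorphism preserves $V=D(g)$ and carries $D(f)$ onto $D(f')$, then use reducedness of $\lambda,\lambda'$ to conclude that $\lambda'\circ\Phi$ and $\lambda$ differ by a unit, giving ${}_V\mathcal{K}$-equivalence via that same diffeomorphism. The paper states the unit-factor step as $\mathcal{C}$-equivalence rather than spelling out the UFD argument, and leaves the branch-matching implicit, but the argument is the same.
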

\begin{proof}
Since $h$ and $h'$ are $\mathcal A$-equivalent there exist germs of
diffeomorphisms such that $\psi\circ f\circ \varphi =f'$ and
$\psi\circ g\circ \phi =g$. Let $D(f)$ denote the discriminant of
$f$. Then $\psi$ preserves $D(g)$ and takes $D(f)$ into $D(f')$. So
$(\lambda'\circ\psi)^{-1}(0)=\lambda^{-1}(0)$ as they are reduced
equations. Therefore $\lambda'\circ\psi$ is $\mathcal C$-equivalent
to $\lambda$ and as $\psi$ preserves $V=D(g)$, they are $_V\mathcal
K$-equivalent.
\end{proof}

By this lemma we deduce that if the function $\lambda$ defining the
discriminant of a fold singularity $f$ is non-simple, then $h$ will
be non-simple. We continue our discussion.

If $f$ is an $A_1$ singularity and $g$ is an $A_2$ singularity,
again all such bigerms are augmentations. Using the classification
of simple submersions preserving a cuspidal edge carried out in
\cite{yofarid} we obtain a list of all possible simple bigerms with
a fold and a cuspidal edge. In fact, these are all obtained by
augmenting the codimension 1 and 2 bigerms $\{(x^3+yx,y),(x,y^2)\}$
and $\{(x^3+yx,y),(x^2,y)\}$. We obtain the families

\begin{equation}
\begin{cases}
(x^3+yx,y,z)\\
(x,y^2+z^k,z)
\end{cases}
\text{ and }
\begin{cases}
(x^3+yx,y,z)\\
(x^2+z^k,y,z)
\end{cases}
\end{equation}
The case $k=1$ in both families is the stable germ $A_1A_2$.

In \cite{brucekirk}, the authors obtain a classification of
submersions under $_{V}\mathcal R$-equivalence, where $V$ is the
discriminant of the swallowtail. Similarly we can obtain the
classification of submersions under $_{V}\mathcal K$-equivalence.
The possible simple bigerms with an $A_1$ and an $A_3$ singularity:

\begin{equation}
\begin{cases}
(x^4+yx+zx^2,y,z)\\
(x,y,z^2)
\end{cases}
\text{ and for $k\geq 2$ }
\begin{cases}
(x^4+yx+zx^2,y,z)\\
(x,y^2+z^k,z)
\end{cases}
\end{equation}
The first one is a codimension 1 monic concatenation of
$(x^4+yx,y)$, and the family is $\mathcal A$-equivalent to
codimension $k$ monic concatenations of $(x^4+y^kx+yx^2,y)$.

If both $f$ and $g$ are $A_2$ singularities we have a codimension 1
binary concatenation

\begin{equation}
\begin{cases}
(x^3+yx,y,z)\\
(x,y,z^3+yz)
\end{cases}
\end{equation}
We should consider two cuspidal edges with some type of contact.
First we study the contact between one of the cuspidal edges and the
limiting tangent plane to the other. From \cite[Example 4.15
ii)]{nosso}, there is only one $\mathcal A$-class for any type of
contact and it has codimension 2, a normal form is
\begin{equation}
\begin{cases}
(x^3+y^lx+zx,y,z)\\
(x,y,z^3+yz)
\end{cases}
\end{equation}
The next type of contact is between the two limiting tangent planes.
Using the complete transversal method we obtain the simple bigerms
of codimensions 3 and 4 respectively
\begin{equation}
\begin{cases}
(x^3+yx,y,z)\\
(x^3+zx+x^2y,y,z)
\end{cases}
\text{ and }
\begin{cases}
(x^3+yx,y,z)\\
(x^3+zx,y,z)
\end{cases}
\end{equation}
Based on the previous example, different types of contact between
the limiting tangent planes yield the same germ.

The multiplicity of a bigerm of an $A_2$ and an $A_3$ singularity
overpasses Nishimura's bound for simplicity.

\subsection{Trigerms}

Due to Nishimura's bound we can only have either 3 folds or 2 folds
and a germ of multiplicity 3.

With 3 folds either the trigerm is a stable triple point ($k=1$ in
any of the families below) or it is an augmentation (again by
\cite[Corollary 3.8]{nosso}). The germ $h$ must be an augmentation
of one of the germs $\{(x^2,y),(x^2+y^l,y),(x,y^2)\}$ since they are
the only trigerms of 3 fold singularities from $\mathbb C^2$ to
$\mathbb C^2$ which admit a 1 parameter stable unfolding. Comparing
with the simple trigerms of 3 immersions in \cite{roberta} a trigerm
with 3 fold singularities is simple if it is equivalent to one of
the following

\begin{equation}
\begin{cases}
(x^2,y,z)\\
(x^2+y+z^k,y,z)\\
(x,y^2,z)
\end{cases}
\text{,}
\begin{cases}
(x^2,y,z)\\
(x^2+y^l+z^2,y,z)\\
(x,y^2,z)
\end{cases}
\text{,}
\end{equation}
\begin{equation}
\begin{cases}
(x^2,y,z)\\
(x^2+yz+z^k,y,z)\\
(x,y^2,z)
\end{cases}
\text{and }
\begin{cases}
(x^2,y,z)\\
(x^2+y^2+z^3,y,z)\\
(x,y^2,z)
\end{cases}
\end{equation}
The last case corresponds to Example \ref{exaugconc} v). Notice that
the second family is also a simultaneous augmentation and
concatenation of a codimension 1 germ.

If we have two fold singularities and a cuspidal edge we must
consider two cases. Firstly, the two $A_1$ singularities must be an
augmentation so we study what kind of augmentations together with a
cuspidal edge give simple germs. We use \cite[Theorem 4.12]{nosso}
about the codimension of a cuspidal concatenation. The only simple
germs here are those in Example \ref{exaugcusp} ii) of codimension
$l+1$

\begin{equation}
\begin{cases}
(x,y,z^2)\\
(x,y,z^2+y^2+x^l)\\
(x^3+yx,y,z)
\end{cases}
\end{equation}
with $l\geq 1$. For $l=1$ we get a codimension 2 germ which can be
seen as a monic concatenation.

Secondly, a fold and a cuspidal edge are also an augmentation, so
together with another fold, $h$ might be a simultaneous augmentation
and concatenation. In this case, if the augmentation comes from a
codimension 1 germ, then $h$ is simple so we get the simple trigerms
of the family

\begin{equation}
\begin{cases}
(x,y,z^2)\\
(x,y^2+z^k,z)\\
(x^3+yx,y,z)
\end{cases}
\end{equation}
for $k\geq 1$, which come from the only codimension 1 germ from
$\mathbb C^2$ to $\mathbb C^2$ with a fold and a cusp. For $k=1$ we
get a codimension 1 monic concatenation. If we consider the
codimension 2 germ $f=\{(x^3+yx,y),(x^2,y)\}$, since the germ
$(x,y,z^2)$ is transverse to the strata of any augmentation of $f$,
the simultaneous augmentation and concatenation of $f$ will yield
non-simple germs.

If $h$ were not a simultaneous augmentation and concatenation, the
cuspidal edge with the other fold would be an augmentation too. So
we would have normal forms
\begin{equation}
\begin{cases}
(x,y^2+z^l,z)\\
(x^2+z^k,y,z)\\
(x^3+yx,y,z)
\end{cases}
\end{equation}
However, in the adjacency of these germs there is the germ
$\{(x,y,z^2),(x^2+z^k,y,z),(x^3+yx,y,z)\}$ which is not simple due
to the previous example, so, in this case, $h$ is not simple.

Example 4.24 iii) shows a $3_{\mu}A_1^2$ case which is not simple,
however, the first fold is not the best possible with respect to the
first branch, so we must consider
$\{(x^3+(y^2+z^l)x,y,z),(x,y,z^2),(x,y,z^2+y)\}$.

\subsection{Quadrigerms}

Here all branches must be fold singularities. From Example
\ref{exaugconc} iii) and iv), the only simple quadrigerms are

\begin{equation}
\begin{cases}
(x^2,y,z)\\
(x,y^2,z)\\
(x^2+y+z^l,y,z)\\
(x,y,z^2)
\end{cases}
\end{equation}

From Example \ref{ejem0n-2} ii) we know that there are no simple
pentagerms.

The following table includes all simple multigerms from $\mathbb
C^3$ to $\mathbb C^3$.

\begin{tabular}{| c | c | c |} \hline $\mathcal K$-orbit
& Normal form & $\mathcal{A}_e$-cod \\ \hline $A_1A_1$ & $
\{(x,y,z^2);(x,y,z^2+h(x,y))\}$ & $\mu(h)$ \\ \hline $A_1A_2$ &
$\{(x^3+yx,y,z);(x,y^2+z^k,z)\}$ & $k-1$ \\
 &
$\{(x^3+yx,y,z);(x^2+z^k,y,z)
\}$ & $2(k-1)$ \\
\hline $A_1A_3$ & $\{(x^4+yx+zx^2,y,z);(x,y^2+z^k,z)\}$ & $k$ \\
\hline $A_2A_2$ & $\{(x^3+zx,y,z);(x,y,z^3+yz)\}$ & $1$ \\
& $\{(x^3+y^2x+zx,y,z);(x,y,z^3+yz)\}$ & $2$ \\
& $\{(x^3+yx,y,z);(x^3+zx+x^2y,y,z)\}$ & $3$ \\
& $\{(x^3+yx,y,z);(x^3+zx,y,z)\}$ & $4$ \\ \hline $3_{\mu}A_1$ &
$\{(x^3+(y^2+z^{\mu+1})x,y,z);(x,y,z^2)\}$ & $\mu+1$ \\
& $\{(x^3+(y^2+z^{\mu+1})x,y,z);(x,y^2,z)\}$ & $2\mu$ \\
\hline $4_1^kA_1$ & $\{(x^4+yx+z^kx^2,y,z);(x,y,z^2)\}$ & $k$ \\
\hline
 $3_{\mu}A_2$ & $\{(x^3+(y^2+z^{\mu+1})x,y,z);(x,y,z^3+yz)\}$ & $\mu+2$ \\ \hline \hline
 $A_1A_1A_1$ & $\{(x^2,y,z);(x^2+y+z^k,y,z);(x,y^2,z)\}$ & $k-1$ \\
 & $\{(x^2,y,z);(x^2+y^k+z^2,y,z);(x,y^2,z)\}$ & $k$ \\
 & $\{(x^2,y,z);(x^2+yz+z^k,y,z);(x,y^2,z)\}, k\geq 2$ & $k$ \\
 & $\{(x^2,y,z);(x^2+y^2+z^3,y,z);(x,y^2,z)\}$ & $4$ \\ \hline
 $A_1A_1A_2$ & $\{(x,y,z^2);(x,y,z^2+y^2+x^k);(x^3+yx,y,z)\}$ &
 $k+1$ \\
  & $\{(x,y,z^2);(x,y^2+z^k,z);(x^3+yx,y,z)\}$ & $k$ \\ \hline $3_{\mu}A_1A_1$ & $\{(x^3+(y^2+z^{\mu+1})x,y,z);(x,y,z^2);(x,y,z^2+y)\}$ & $\mu+2$ \\ \hline
  \hline $A_1A_1A_1A_1$ & $\{(x^2,y,z);(x,y^2,z);(x^2+y+z^k,y,z);(x,y,z^2)\}$
  & $k$ \\ \hline
\end{tabular} \\ \\
Where $h(x,y)$ is a simple function in two variables, $\mu(h)$
stands for the Milnor number of $h$ and $k\geq 1$ unless stated
otherwise.

 \noindent
R. Oset Sinha,\\
Departament de Geometria i Topologia,\\
Universitat de València,\\
Dr. Moliner 50, 46100, Valencia, Spain.\\
raul.oset@uv.es\\
\\
M. A. S. Ruas, R. Wik Atique,\\
Instituto de Ci\^encias Matem\'aticas e de Computa\c{c}\~ao - USP,\\
Av. Trabalhador s\~ao-carlense, 400 - Centro,\\
CEP: 13566-590 - S\~ao Carlos - SP, Brazil.\\
maasruas@icmc.usp.br, rwik@icmc.usp.br

\end{document}